\newtheorem{thm}{Theorem}[section]
\newtheorem{conj}[thm]{Conjecture}
\theoremstyle{definition}
\newtheorem{defn}[thm]{Definition}
\newtheorem{ex}[thm]{Example}
\newtheorem{rmk}[thm]{Remark}
\newtheorem{assu}{Assumption}
\newtheorem*{nota*}{Notation}
\newtheorem{lemma}{Lemma}
\newcommand{\sO}{\mathcal O}
\newcommand{\bM}{\overline{\mathcal M}}
\newcommand{\on}{\operatorname}
\title{A mirror theorem for multi-root stacks and applications}%Local/log/orbifold Gromov--Witten invariants }
\author{Hsian-Hua Tseng}
\address{Department of Mathematics\\ Ohio State University\\ 100 Math Tower, 231 West 18th Ave.\\Columbus\\ OH 43210\\ USA}
\email{hhtseng@math.ohio-state.edu}
\author{Fenglong You}
\address{Department of Mathematics \\ ETH Z\"urich, \\Rämistrasse 101, \\8092 Zürich, \\Switzerland}
\email{fenglong.you@math.ethz.ch}
\begin{document}

\keywords{Gromov--Witten invariants, Mirror symmetry, multi-root stacks}
\subjclass[2010]{Primary: 	14N35. Secondary: 14A20, 14J33, 53D45}

\maketitle

\begin{abstract}
Let $X$ be a smooth projective variety with a simple normal crossing divisor $D:=D_1+D_2+...+D_n$, where $D_i\subset X$ are smooth, irreducible and nef. We prove a mirror theorem for multi-root stacks $X_{D,\vec r}$ by constructing an $I$-function lying in a slice of Givental's Lagrangian cone for Gromov--Witten theory of multi-root stacks. We provide three applications: (1) We show that some genus zero invariants of $X_{D,\vec r}$ stabilize for sufficiently large $\vec r$. (2) We state a generalized local-log-orbifold principle conjecture and prove a version of it. (3) We show that regularized quantum periods of Fano varieties coincide with classical periods of the mirror Landau--Ginzburg potentials using orbifold invariants of $X_{D,\vec r}$.
\end{abstract}

\tableofcontents

\section{Introduction}

A mirror theorem refers to a relation between a generating function of genus zero Gromov--Witten invariants (the $J$-function) and a period integral (the $I$-function) of the mirror. Such a mirror theorem was first proved by Givental \cite{Givental96} and Lian--Liu--Yau \cite{LLY}, where the $J$-function and the $I$-function are equal after a change of variables called the mirror map. The $J$-function naturally lies in Givental's Lagrangian cone.  A more general formulation of a mirror theorem is to construct an explicit $I$-function and prove that the $I$-function lies in Givental's Lagrangian cone. In this paper, we study the genus zero orbifold Gromov--Witten theory of multi-root stacks. We generalize the main theorem of \cite{FTY} to simple normal crossing divisors. In other words, we prove a mirror theorem for multi-root stacks by constructing an $I$-function which lies in Givental's Lagrangian cone.

Let $X$ be a smooth projective variety over $\mathbb{C}$. Let $$D_1,..., D_n\subset X$$ be divisors which are smooth, irreducible, and nef. For natural numbers $r_1,...,r_n$ which are pairwise co-prime, the associated multi-root stack
$$X_{(D_1, r_1), (D_2, r_2), ..., (D_n, r_n)}$$
is nonsingular and has a well-defined Gromov-Witten theory. Let 
\[
\vec r=(r_1,r_2,\ldots, r_n).
\]
We write
\[
X_{D,\vec r}:=X_{(D_1, r_1), (D_2, r_2), ..., (D_n, r_n)}.
\]
A description of $X_{D,\vec r}$ as a complete intersection inside a toric stack bundle is given in Section \ref{sec:construction}. Using this description and known results we derive a mirror theorem for $X_{D,\vec r}$ in Section \ref{sec:mir_thm}. 

There are some applications: 
\begin{itemize}
    
\item 

The first application is to take the large $\vec r$ limit of the $I$-function, which implies that relevant genus zero invariants stabilize as the $r_i$'s become sufficiently large.

\item

The second application is to formulate the local-log-orbifold principle and prove a version of it using $I$-functions. This provides a simple point of view for the local-log principle from mirror symmetry.

\item The third application is to prove that regularized quantum periods for Fano varieties coincide with generating functions of orbifold Gromov--Witten invariants of root stacks which can be viewed as classical periods of the Landau--Ginzburg superpotentials. This connects our theory with the Fano search program and the Gross--Siebert program.
\end{itemize}

\subsection{Mirror theorem and the large $\vec r$ limit}

\begin{thm}\label{thm-mirror-intro}
Let $X$ be a smooth projective variety. Let $D:=D_1+D_2+...+D_n$ be a simple normal-crossing divisor with $D_i\subset X$ smooth, irreducible and nef. The $I$-function $I_{X_{D,\vec r}}$ of the root stack $X_{D,\vec r}$ lies in Givental's Lagrangian cone $\mathcal L_{X_{D,\vec r}}$ of $X_{D,\vec r}$.
\end{thm}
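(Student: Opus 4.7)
The strategy is to realize $X_{D,\vec r}$ as a complete intersection inside a larger smooth ambient stack for which a mirror theorem is already available, and then pass the mirror theorem from the ambient stack to $X_{D,\vec r}$ via an orbifold quantum Lefschetz argument. This parallels the scheme used in \cite{FTY} for a single smooth divisor, but requires a genuinely multi-divisor ambient construction.

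First, I would invoke the description from Section \ref{sec:construction}, which realizes $X_{D,\vec r}$ as a complete intersection inside a toric stack bundle $Y \to X$. Concretely, the ambient $Y$ is built by taking, for each $i$, a stacky $\mathbb{P}^1$-bundle over $X$ associated to the pair $(\mathcal{O}_X(D_i), s_i)$ with $s_i$ the defining section of $D_i$, and then fibering these together over $X$. The coprimality of the $r_i$'s ensures that the resulting stack is smooth, and the sections $s_1,\ldots,s_n$ then cut out $X_{D,\vec r}$ inside $Y$ as the vanishing locus of a section of an explicit direct sum of line bundles.

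Next, I would apply the mirror theorem for toric stack bundles (extending Brown's theorem for toric bundles to the stacky setting, as in the work of Jiang--Tseng and \cite{FTY}) to produce an explicit $I$-function $I_Y$ lying on Givental's cone $\mathcal{L}_Y$. This assembles hypergeometric factors, one per toric ray of the fiber, with base parameters given by Chern classes of $\mathcal{O}_X(D_i)$ in place of the usual equivariant parameters. I would then apply the orbifold quantum Lefschetz principle: twisting $I_Y$ by the hypergeometric factors associated to the normal bundle of $X_{D,\vec r}$ in $Y$ yields a modified slice of $\mathcal{L}_Y$ that, by the functoriality of Givental's formalism under complete intersections, descends to a slice of $\mathcal{L}_{X_{D,\vec r}}$. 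Identifying this with the $I_{X_{D,\vec r}}$ defined in Section \ref{sec:mir_thm} completes the argument.

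The main obstacle will be handling the orbifold sectors correctly. The inertia stack of $X_{D,\vec r}$ has components indexed by tuples of roots of unity along the various gerbes over the $D_i$, and the hypergeometric modifications coming from quantum Lefschetz must carry fractional shifts recording the ages on each sector. Ensuring that the nef hypothesis on $D_i$ suffices to apply the mirror theorem in the small-parameter regime, and that no unexpected non-extremal contributions obstruct membership in $\mathcal{L}_{X_{D,\vec r}}$, is the key technical step; this is precisely where the nef assumption of the theorem is used.
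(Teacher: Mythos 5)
Your proposal follows essentially the same strategy as the paper: realize $X_{D,\vec r}$ as a complete intersection inside the toric stack bundle $Y$ (obtained, in the paper's presentation, by root-stacking a tower of $\mathbb{P}^1$-bundles $X_n\to\cdots\to X$ along the sections $X_{i\infty}$), apply Brown's and Jiang--Tseng--You's mirror theorems for toric fibrations and toric stack bundles to obtain $I_Y\in\mathcal{L}_Y$, and then descend to $\mathcal{L}_{X_{D,\vec r}}$ via orbifold quantum Lefschetz using the convexity of $\mathcal{O}_{X_{i+1}}(1)$ guaranteed by nefness of $D_i$. Your description of $Y$ as a fibered product of stacky $\mathbb{P}^1$-bundles is equivalent to the paper's iterated-tower-then-root-stack construction, and your identification of where the nef hypothesis enters (convexity for quantum Lefschetz and the small-parameter form of the ambient $I$-function) matches the paper's use of it.
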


Theorem \ref{thm-mirror-intro} is stated for the non-extended $I$-function in Theorem \ref{thm-mirror} and is stated for the extended $I$-function in Theorem \ref{thm:mirror-extended}, where the non-extended $I$-function and the extended $I$-function are defined in (\ref{I-orb}) and (\ref{Extended-I-function}) respectively. The extended $I$-function comes from Jiang's construction \cite{Jiang} of torick stack (and toric stack bundles) using $S$-extended stacky fan instead of stacky fans. The extended $I$-functions encode some additional orbifold data. Under some assumptions (for example when the mirror maps are trivial), the extended $I$-functions can be used to compute orbifold invariants with several orbifold markings.

Theorem \ref{thm-mirror-intro} can be used to compute the genus zero invariants of $X_{D,\vec r}$ and shows that relevant invariants stabilize when the $r_i$ are sufficiently large. Therefore, the following conjecture is true for  invariants in the $J$-function of $X_{D,\vec r}$.

\begin{conj}\label{conj-large-r}
Genus zero orbifold Gromov--Witten invariants of the root stack $X_{D,\vec r}$ (after multiplying by suitable powers of $r_i$) stabilize when $r_i$'s are sufficiently large. Moreover, higher genus orbifold  Gromov--Witten invariants of the root stack $X_{D,\vec r}$ (after multiplying by suitable powers of $r_i$) are polynomials in $r_i$ with degree bounded by $2g-1$ when the $r_i$'s are sufficiently large. 
\end{conj}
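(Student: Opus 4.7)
The plan is to treat the genus-zero and higher-genus parts of the conjecture separately, using Theorem~\ref{thm-mirror-intro} as the main leverage in genus zero. For the genus-zero part, the strategy is to read off the $\vec r$-dependence directly from the explicit formula for $I_{X_{D,\vec r}}$ supplied by Theorem~\ref{thm-mirror-intro}. Each hypergeometric modification factor indexed by a divisor $D_i$ has the schematic form
\[
\prod_{0<k\le D_i\cdot\beta,\ \langle k/r_i\rangle=\langle D_i\cdot\beta/r_i\rangle}\!\left(D_i+\tfrac{k}{r_i}z\right),
\]
so after multiplying by an explicit monomial $r_i^{m_i(\beta)}$ the coefficient of each $z^a q^\beta$ becomes a polynomial in $1/r_i$ of bounded degree. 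Consequently each $q^\beta$-coefficient of $I_{X_{D,\vec r}}$, normalized by $\prod_i r_i^{m_i(\beta)}$, has a well-defined limit as $\vec r\to\infty$, which recovers the $I$-function of the log/relative theory, generalizing the smooth-divisor picture of \cite{FTY}.

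To convert stabilization of the $I$-function into stabilization of arbitrary genus-zero descendant invariants, I would apply Birkhoff factorization on the Lagrangian cone $\Ll_{X_{D,\vec r}}$. Since $I_{X_{D,\vec r}}$ and $J_{X_{D,\vec r}}$ are both slices of $\Ll_{X_{D,\vec r}}$, the mirror map and the formula that extracts $J$ from $I$ are universal in the $I$-coefficients, so stabilization propagates with a matching renormalization. This proves the conjecture for invariants packaged in the $J$-function directly; arbitrary genus-zero descendants then follow by Givental's $S$-matrix formalism and quantum-product reconstruction, both of which are determined in genus zero by the $J$-function.

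The higher-genus part is the main obstacle, since no mirror theorem controls it. The natural approach is virtual $\GM$-localization on $\bM_{g,n}(X_{D,\vec r},\beta)$ with the torus scaling the fibers of the universal root gerbes: the fixed loci split as products of moduli of (twisted) maps to $X$ relative to $D$ with Hurwitz-type decorations at the orbifold markings, glued along $\psi$-integrals over $\bM_{g',n'}$ factors. The parameters $r_i$ then enter only through equivariant normal-bundle weights and monodromy factors at the orbifold markings, and the expected degree bound $2g-1$ should follow from Mumford-type vanishing on the Hodge bundle of the contracted higher-genus components. The principal technical difficulty is the combinatorial bookkeeping at strata where orbifold markings attached to different $D_i$'s collide, a genuinely multi-divisor phenomenon absent from the single-divisor theory, where one must exploit the pairwise-coprime assumption on the $r_i$ to separate the contributions of each root.
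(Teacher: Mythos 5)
The statement you are asked to prove is labelled a \emph{conjecture} in the paper, and the authors do not in fact prove it. What the paper establishes (in Section~\ref{sec:limit}, using Theorem~\ref{thm-mirror-intro} and the Birkhoff factorization procedure of \cite{CG}*{Corollary 5}) is only the genus-zero case for those invariants that appear in the $J$-function, after the explicit renormalization by $\prod_{i\colon d_i>0} r_i$ read off from the $I$-function. Everything beyond that---general genus-zero descendant invariants with several orbifold markings, and all of higher genus with the $2g-1$ degree bound---is explicitly deferred to ``a forthcoming work,'' motivated by the smooth-divisor result of \cite{TY20}. So a genuinely complete proof here would be a new contribution beyond the paper, and should be judged as such.

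Against that backdrop, the genus-zero half of your proposal begins correctly (reading the $\vec r$-dependence off the hypergeometric factor and applying Birkhoff factorization is exactly the paper's route), but it overreaches in the last step. Saying that ``arbitrary genus-zero descendants follow by Givental's $S$-matrix formalism and quantum-product reconstruction'' glosses over the fact that the $I$-function you control only determines a slice of $\Ll_{X_{D,\vec r}}$ near the image of the mirror map, and, more seriously, that the state space $H^*_{\mathrm{CR}}(X_{D,\vec r})$ itself grows with $\vec r$ (new twisted sectors appear). One must pin down, for each twisted-sector insertion, which power of $r_i$ normalizes the corresponding descendant invariant, and verify that the resulting family of normalized correlators, indexed over the $\vec r$-dependent bases, converges to a correlator on the formal state space $\mathfrak H$. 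The paper handles this only for insertions visible in the (extended) $I$-function by the ad hoc identification $\left(\prod_{i\colon d_i>0} r_i\right)\mathbf 1_{-\langle d_1/r_1,\ldots,d_n/r_n\rangle}\leftrightarrow[\mathbf 1]_{-d_1,\ldots,-d_n}$; the general genus-zero case is genuinely not covered.

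The higher-genus half is, as you note yourself, a sketch of an approach rather than an argument: a proposed localization on a torus action on the root gerbes, with fixed-locus combinatorics, normal-bundle weights, and a Mumford-type vanishing all left to be worked out, and the interaction of orbifold markings for different $D_i$ flagged as an open technical difficulty. In the smooth-divisor case this program was carried out in \cite{TY20} and gives the $2g-1$ bound, but nothing in your write-up addresses what is new in the multi-divisor setting (for instance, strata where markings twisted by different $\mu_{r_i}$-factors collide in a component of $D$, which the pairwise-coprimality should control but does not automatically control). So this half does not constitute a proof, and cannot be assessed against the paper's nonexistent proof either.
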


When Conjecture \ref{conj-large-r} holds, we {\em formally} consider the constant terms of the polynomials as Gromov--Witten invariants of the {\em infinitely root stacks} $$X_{D,\infty}.$$ When $D$ is a smooth divisor, results in \cite{ACW}, \cite{TY}, \cite{FWY} and \cite{FWY19} show that the formal Gromov--Witten theory of the infinitely root stacks is simply the relative Gromov--Witten theory of $(X,D)$. When $D$ is an effective reduced simple normal crossing divisor, one may expect that the formal Gromov--Witten theory of the infinitely root stacks is the logarithmic Gromov--Witten theory of $(X,D)$. However, it may not be true in general. This is why we call the large $\vec r$ limit of the Gromov--Witten theory of $X_{D,\vec r}$ the Gromov--Witten theory of the infinitely root stack $X_{D,\infty}$, instead of conjecturing the limit to be the log Gromov--Witten theory of $(X,D)$.
Furthermore, inspired by \cite{TY20}, we also expect that the degrees (in $r_i$, for each $i$) of the polynomials for the orbifold Gromov--Witten invariants of $X_{D,\vec r}$ are bounded by $2g-1$. 

Conjecture \ref{conj-large-r} has recently been proved in \cite{TY20c} and the foundation for the formal Gromov--Witten theory of $X_{D,\infty}$ has been studied there.

\begin{rmk}
Infinitely root stacks have been studied in \cite{TV}. However, the Gromov--Witten theory of infinite root stacks has not been defined. Although it is likely to be true, we do not claim that the Gromov--Witten theory of infinite root stacks (if it can be defined) is the same the limit of the Gromov--Witten theory of finite root stacks. By \cite{TV}, the infinite root stack determines the logarithmic structure. It would be interesting to define Gromov--Witten theory of infinite root stacks directly, then compare it with logarithmic Gromov--Witten theory.
\end{rmk}

In Section \ref{sec:limit}, we compute some invariants of $X_{D,\infty}$ that coincide with enumerative expectations. In Example \ref{ex-p-2-orb-inv}, when $X=\mathbb P^2$ and $D$ is the union of a line and a conic, we confirm that formal invariants of $X_{D,\infty}$ count the numbers of curves in $\mathbb P^2$ through one generic point and with maximal tangency to the line and the conic at one point respectively. The number is $\frac{(2d)!}{(d!)^2}$. When $X=\mathbb P^1\times \mathbb P^1$ and $D$ is the union of two distinct $(1,1)$ curves $L_1$ and $L_2$, in Example \ref{ex:p1-p1}, we confirm that formal invariants of $X_{D,\infty}$ count the numbers of curves in $\mathbb P^1\times \mathbb P^1$ through one generic point and with maximal tangency to $L_1$ and $L_2$ at one point respectively. The number is $\frac{(d_1+d_2)!^2}{(d_1!)^2(d_2!)^2}$.

\begin{rmk}
In general, orbifold invariants will not be the same as log invariants. When orbifold invariants equal to log invariants, we can use Theorem \ref{thm-mirror-intro} to compute log invariants which are usually difficult to compute. On the other hand, when orbifold invariants and log invariants are different, orbifold invariants provide another virtual count of numbers of curves with tangency conditions along a simple normal crossing divisor. These orbifold invariants are more accessible in terms of computation. Therefore, computing orbifold invariants of root stacks are interesting either way.
\end{rmk}

\subsection{The local-log-orbifold principle}

\subsubsection{Smooth divisors}
Let $X$ be a smooth projective variety. Let $D$ be a divisor that is smooth, effective, and nef. Let $\beta$ be a curve class of $X$ such that $D\cdot \beta>0$. In this case, the moduli space $$\bM_{g,l}(\sO_X(-D),\beta)$$ of genus $g$ stable maps of class $\beta$ to the total space of $\sO_X(-D)$ naturally coincides with the moduli space $$\bM_{g,l}(X,\beta)$$ of genus $g$ stable maps of class $\beta$ to $X$. The moduli space $$\bM_{g,l,(d)}(X/D, \beta)$$ of genus $g$ relative stable maps of class $\beta$ to $(X,D)$ with only one contact condition of maximal tangency along $D$ admits a natural map
$$F:\bM_{g,l,(d)}(X/D,\beta)\to \bM_{g,l}(X,\beta)$$ obtained by forgetting the relative marked point and stabilizing. 

The first instance of the {\em log-local principle} is the following equality between virtual fundamental classes of these moduli spaces, proven in \cite{vGR}:
\begin{equation}\label{eqn:log_local}
   [\bM_{0,0}(\sO_X(-D),\beta)]^{\on{vir}}=\frac{(-1)^{d-1}}{d}F_*[\bM_{0,0,(d)}(X/D,\beta)]^{\on{vir}},
\end{equation}
where $d=D\cdot \beta$. This formula was first conjectured by Takahashi \cite{Takahashi} for $\mathbb P^2$ with a smooth cubic. Takahashi's conjecture was proved by Gathmann in \cite{Gathmann}.

We formulate Equality (\ref{eqn:log_local}) in a slightly more general form as follows:
\begin{equation}\label{eqn:log_local_1}
   \on{ev}_1^*(D)\cap [\bM_{0,1}(\sO_X(-D),\beta)]^{\on{vir}}=(-1)^{d-1}F_*[\bM_{0,0,(d)}(X/D,\beta)]^{\on{vir}},
\end{equation}
where
\[
F:\bM_{0,0,(d)}(X/D,\beta)\to \bM_{0,1}(X,\beta)
\]
is the forgetful map that forgets the relative condition, but remembers the marking.
Equality (\ref{eqn:log_local}) can be recovered using the divisor equation. Note that (\ref{eqn:log_local_1}) can be proved following the proof of (\ref{eqn:log_local}) with minor adjustment. It was also pointed out by Fan--Wu in \cite{FW}.

In Section \ref{sec:sm_div}, we use the mirror theorem for relative Gromov-Witten theory of $(X,D)$, derived in \cite{FTY}, to calculate the relative Gromov-Witten invariants on the right-hand side of (\ref{eqn:log_local_1}). The local Gromov-Witten invariants on the left-hand side of (\ref{eqn:log_local_1}) is calculated using a well-known mirror theorem, see, for example, \cite{g}. More specifically, identifying the non-extended $I$-function of the relative Gromov--Witten theory of $(X,D)$ with the $I$-function of the local Gromov--Witten theory of $\mathcal O_X(-D)$ yields (\ref{eqn:log_local_1}) at the level of invariants with some extra markings.
\begin{thm}\label{thm-local-rel}
The following identity between relative and local Gromov--Witten invariants holds: 
\[
\left\langle \prod_{i=1}^l[\gamma_i]_0, [\iota^*\gamma]_{d}\bar{\psi}^a\right\rangle_{0,l,(d),\beta}^{(X,D)}=(-1)^{d-1}\left\langle \prod_{i=1}^l\gamma_i, D\cdot \gamma \bar{\psi}^a \right\rangle_{0,l+1,\beta}^{\mathcal O_{X}(-D)},  
\]
where $\gamma, \gamma_i\in H^*(X)$, $a\in \mathbb Z_{\geq 0}$ and $\iota: D \hookrightarrow X$ is the inclusion map.
\end{thm}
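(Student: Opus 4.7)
The plan is to match the non-extended $I$-function $I^{\text{rel}}_{(X,D)}$ of relative Gromov--Witten theory of $(X,D)$, provided by \cite{FTY}, with the standard Givental $I$-function $I^{\text{loc}}_{\mathcal O_X(-D)}$ of the local theory of $\mathcal O_X(-D)$, and then to read off Theorem \ref{thm-local-rel} from the comparison.

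First, I would write both $I$-functions in explicit hypergeometric form. For an effective curve class $\beta$ with $d := D\cdot\beta$, the $\beta$-summand of $I^{\text{rel}}_{(X,D)}$ carries, at the maximal-tangency twisted sector, the hypergeometric factor $\prod_{0<k<d}(D+kz)$, while the $\beta$-summand of $I^{\text{loc}}_{\mathcal O_X(-D)}$ carries
\[
\prod_{k=0}^{d-1}(-D-kz) \;=\; (-1)^{d-1}(-D)\prod_{k=1}^{d-1}(D+kz).
\]
Hence the two hypergeometric factors agree up to the prefactor $(-1)^{d-1}(-D)$; this prefactor already contains both the sign $(-1)^{d-1}$ and a divisor class $-D$ that, when paired with the pulled-back insertion $\iota^*\gamma$, reproduces the insertion $D\cdot\gamma$ on the local side.

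Second, I would invoke the two mirror theorems: the result of \cite{FTY} places $I^{\text{rel}}_{(X,D)}$ on the Lagrangian cone $\mathcal L_{(X,D)}$, and Givental's theorem places $I^{\text{loc}}_{\mathcal O_X(-D)}$ on $\mathcal L_{\mathcal O_X(-D)}$. Since $D$ is nef, in both cases the non-extended $I$-function equals the $J$-function on the relevant slice through the origin, so $I$-function coefficients directly yield one-marked-point descendant invariants. The matching from the previous step thus gives the identity of the theorem in the case $l=0$. To accommodate the $l$ additional primary insertions $\gamma_1,\dots,\gamma_l$, I would evaluate the $J$-functions at $\tau = \sum_i t_i\gamma_i$ and extract the coefficient of $\prod_i t_i$; this is the standard Givental technique for reading off multi-point descendants from a one-point $J$-function, and since the $\gamma_i$ are ambient classes on both sides, they transform unchanged under the correspondence.

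The hard part is the bookkeeping at the special marked point: one must verify that the $\bar\psi^a$ descendant on the orbifold twisted marking $[\iota^*\gamma]_d$ of the relative theory corresponds, with no age shift or $r$-power correction, to the $\bar\psi^a$ descendant on the ordinary local marking carrying $D\cdot\gamma$, and that the divisor factor $-D$ and sign $(-1)^{d-1}$ extracted from the hypergeometric comparison exactly reproduce the right-hand side of the theorem. The nef hypothesis and the use of non-extended $I$-functions are what make these identifications clean, so that no nontrivial mirror transformation needs to be inverted.
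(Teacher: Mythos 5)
Your approach is essentially the same as the paper's: write the non-extended relative $I$-function and the local $I$-function in hypergeometric form, observe that the hypergeometric factors agree up to the prefactor $(-1)^{d}D$ (equivalently, identify $\iota_{!}[\delta]_{-d}$ with $D\cup\delta$ and pull out the sign $(-1)^{d-1}$), and then transfer this identity of $I$-functions to an identity of invariants via the two mirror theorems.

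There is one place where your argument is too quick. You assert that ``since $D$ is nef, in both cases the non-extended $I$-function equals the $J$-function on the relevant slice through the origin,'' so that no mirror transformation is needed. This is not what nefness gives you: nefness of $D$ controls the hypergeometric modification factor, but it does not make the mirror map trivial, because both $I$-functions have $J_{X,\beta}(t,z)$ built in, and $J_X$ need not coincide with $I_X$ (for example if $X$ is not Fano). The paper is careful about this: when the mirror maps are nontrivial, it observes that the inverse mirror maps on the two sides \emph{match} under the identification $\iota_{!}[\delta]_{-i}=D\cup\delta$, and then extracts the multi-point descendant invariants from the $I$-functions using the Birkhoff factorization procedure of Coates--Givental (\cite{CG}*{Corollary 5}). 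Your parametric-differentiation step (evaluating at $\tau=\sum_i t_i\gamma_i$ and taking the $\prod_i t_i$-coefficient) is a reasonable substitute for the Birkhoff step when the classes $\gamma_i$ are ambient, but you should replace ``no mirror transformation is needed'' with the actual argument that the mirror transformations on the two sides are intertwined by the same identification, so they cancel out when comparing invariants.

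A second, smaller point: the paper's $I$-function identity reads $\iota_{!}I_{(X,D),\beta}=(-1)^{d-1}\bigl[I_{\mathcal O_X(-D),\beta}\bigr]_{\lambda=0}$; the extra sign relative to the raw comparison of hypergeometric factors ($(-1)^d D$ versus $D$ under $\iota_{!}$) is absorbed into the conventions for the twisted versus untwisted Poincar\'e pairing when one passes from $I$-function coefficients to descendant invariants. You should flag this explicitly rather than leave it to the reader, since it is exactly where the $(-1)^{d-1}$ in the theorem comes from.
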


Using the extended $I$-function of the relative Gromov--Witten theory of $(X,D)$ and the $I$-function of the local Gromov--Witten theory of $\mathcal O_X(-D)$, we are also able to compute relevant relative invariants that appear in (\ref{eqn:log_local}). This computation is done in Section \ref{sec:extend-rel-I}. Therefore, the relative mirror theorem implies the log-local principle (\ref{eqn:log_local}) at the level of invariants.

\begin{rmk}
One can also simply understand the log-local principle from the point of view of mirror symmetry. It is not hard to notice that the non-extended relative $I$-function and the local $I$-function are almost identical. By identifying the $I$-functions, we may view local mirror symmetry as a sector (a sub-theory) of relative mirror symmetry. The part of relative mirror symmetry that corresponds to local mirror symmetry is probably the part that has been studied the most. Likewise, one may consider genus zero local Gromov--Witten theory as a sub-theory of genus zero relative Gromov--Witten theory.
\end{rmk}

\subsubsection{Normal crossing divisors}
More generally, there is a conjectural log-local principle in the simple normal crossing case. We assume instead that $$D=D_1+...+D_n$$ is an effective reduced simple normal crossing divisor with each component $D_i$ smooth, irreducible, and nef. One can consider the moduli space $\bM_{0,0,(d_1),\ldots, (d_n)}(X/D,\beta)$ of genus zero basic stable log maps of class $\beta$ to $(X,D)$ where there is one relative marking with maximal contact order $d_i$ to each component $D_i$. Then the following is conjectured in \cite{vGR}*{Conjecture 1.4}:

\begin{conj}\label{conj:log_local}
Let $\beta$ be a curve class of $X$ with $d_i:=D_i\cdot \beta>0$ for $i\in \{1,\ldots, n\}$. Then
\begin{equation}
   [\bM_{0,0}(\oplus_{i=1}^n\sO_X(-D_i),\beta)]^{\on{vir}}=\left(\prod_{i=1}^n\frac{(-1)^{d_i-1}}{d_i}\right)F_*[\bM_{0,0,(d_1),\ldots, (d_n)}(X/D,\beta)]^{\on{vir}}.
\end{equation}
\end{conj}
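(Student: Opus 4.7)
My plan is to prove Conjecture \ref{conj:log_local} by induction on $n$, reducing it to an auxiliary \emph{spectator} log-local principle that generalises van Garrel--Graber--Ruddat. The base case $n=0$ is tautological, and $n=1$ is covered by Theorem \ref{thm-local-rel}. The strategy is to strip off the log conditions one component at a time, converting each into a local condition on the corresponding line bundle while preserving the remaining log structure.

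\emph{Auxiliary lemma (spectator log-local).} The key intermediate statement I would establish is: for $(X,D'+D_n)$ with $D_n$ smooth irreducible nef and $D'$ a simple normal crossing divisor meeting $D_n$ transversally, the log GW invariant of $(X,D'+D_n)$ with maximal tangency $d_n$ along $D_n$ together with arbitrary contact data along $D'$ equals $(-1)^{d_n-1}/d_n$ times the \emph{mixed local-log invariant} of $\sO_X(-D_n)$, i.e.\ the log GW invariant of $(\on{Tot}(\sO_X(-D_n)),\pi^*D')$ with the same contact data along $\pi^*D'$. The proof would follow the vGR strategy: logarithmically degenerate $X$ along $D_n$ to $X\cup_{D_n}Y$ with $Y=\mathbb P(N_{D_n/X}\oplus\sO_{D_n})$, while the components of $D'$ survive transversely on the $X$-piece and on the zero-section of $Y$. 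Applying the logarithmic degeneration formula and analyzing the resulting tropical decomposition, maximal tangency at $D_n$ would force a single contributing splitting: a $d_n$-fold cover of a $\mathbb P^1$-fibre of $Y$ contributing the rubber factor $(-1)^{d_n-1}/d_n$, glued to a curve on $X$ with contact $d_n$ at the gluing divisor $D_n$; the latter is identified with the asserted mixed invariant via the local-relative dictionary of Section \ref{sec:sm_div}.

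\emph{Iteration.} Applying the auxiliary lemma to $D_n,D_{n-1},\ldots,D_1$ in turn converts log conditions into local conditions one at a time, yielding the local GW invariant of $\bigoplus_{i=1}^n\sO_X(-D_i)$ with the accumulated sign $\prod_{i=1}^{n}(-1)^{d_i-1}/d_i$, as claimed. Theorem \ref{thm-mirror-intro} of the present paper provides an independent computational check: the large-$\vec r$ limit of the orbifold $I$-function of $X_{D,\vec r}$ matches the local $I$-function of $\bigoplus_{i=1}^n\sO_X(-D_i)$ up to the predicted sign, confirming the numerical identity on the target side of the conjecture. In any regime where the orbifold-log comparison of \cite{ACW,TY,FWY,FWY19} can be extended to the normal crossing setting, this orbifold mirror computation would also yield the log invariant directly and short-circuit the degeneration argument.

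\emph{Main obstacle.} The hard step is the tropical combinatorics inside the auxiliary lemma: in the presence of spectator log structure $D'$, one must show that no non-trivial tropical splittings in which contact orders along components of $D'$ are rebalanced between the $X$-piece and the $Y$-piece contribute to the degeneration sum. A delicate dimension count that simultaneously exploits the nef hypothesis on every $D_i$, together with the punctured-log-maps formalism of Abramovich--Chen--Gross--Siebert to handle the partial balancing at the gluing stratum, should suffice to force all such configurations into the virtual boundary, but producing a clean vanishing statement is the central technical challenge and is also where the proof for normal-crossing $D$ genuinely outruns the single-divisor argument of vGR.
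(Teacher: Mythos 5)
First, a point of comparison with the paper: the statement you are attacking is not proved in this paper at all. It is Conjecture \ref{conj:log_local}, quoted from \cite{vGR} (their Conjecture 1.4), known only in special cases by \cite{BBv} and \cite{NR}. What the paper actually proves are orbifold surrogates (Theorems \ref{thm-local-orb} and \ref{thm-local-orb-ext}), where the log invariants of $(X,D)$ are replaced by formal invariants of the infinite root stack $X_{D,\infty}$ and the identity is established at the level of invariants by matching $I$-functions of $X_{D,\infty}$ and of $\oplus_{i=1}^n\sO_X(-D_i)$, plus Birkhoff factorization. So your attempt should be judged as a proposed proof of an open conjecture, not measured against an existing argument.

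As such, it has a genuine gap precisely where you locate it: the ``spectator log-local'' auxiliary lemma is the entire content of the conjecture, and your sketch does not establish it. In the degeneration of $X$ to $X\cup_{D_n}\mathbb P(N_{D_n/X}\oplus\sO_{D_n})$ with the residual divisor $D'$ present, the decomposition formula for log/punctured Gromov--Witten theory produces a sum over tropical types in which curves can acquire negative or rebalanced contact orders along $D'\cap D_n$ and along the gluing divisor; showing that only the single ``fibre multiple cover glued to a maximal-tangency curve'' type contributes is exactly the technical heart of \cite{BBv} and \cite{NR}, and those authors succeed only under strong additional hypotheses (toric boundary, product decompositions, etc.). A ``delicate dimension count exploiting nefness'' is asserted, not performed, and there is no reason given that it closes in general; nefness of each $D_i$ does not by itself kill the rebalanced configurations. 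Moreover, the base of the induction must itself be the spectator version (log structure along $D'$ on the total space $\on{Tot}(\sO_X(-D_n))$, a non-proper, non-smooth-divisor log pair), so $n=1$ is not simply Theorem \ref{thm-local-rel} and the iteration is not anchored. Finally, your proposed shortcut via Theorem \ref{thm-mirror-intro} cannot ``short-circuit'' the conjecture: the large-$\vec r$ limit computes invariants of $X_{D,\infty}$, and the paper explicitly cautions that for normal crossing $D$ these need not agree with log invariants of $(X,D)$ (this is why the paper states Conjecture \ref{conj-local-orb} separately and only proves the orbifold statements). So the mirror-theorem computation corroborates the orbifold analogue, not Conjecture \ref{conj:log_local} itself.
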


Conjecture \ref{conj:log_local} has been proved in some cases in \cite{BBv} and \cite{NR}.

At the level of invariants, Conjecture \ref{conj:log_local} states that, after dividing by $\prod_{i=1}^n(-1)^{d_i+1}d_i$, the genus $0$ log Gromov-Witten invariants of maximal tangency and class $\beta$ of $(X, D)$ are equal to the genus $0$ local Gromov-Witten invariants of class $\beta$ of the total space $\oplus_{i=1}^n\sO_X(-D_i)$, with the same insertions.

Inspired by Equality (\ref{eqn:log_local_1}), it is natural to formulate the following generalized conjecture. Consider a partition of the index set
\[
\{1,2,\ldots, n\}
\]
into disjoint subsets $I_1, \ldots, I_m$. 
We assume that the intersection $\cap_{i\in I_j}D_i$ is not empty for all $j\in \{1,\ldots, m\}$. Let $$\bM_{0,0,\{(d_i)\}_{i\in I_1},\ldots, \{(d_i)\}_{i \in I_m}}(X/D,\beta)$$ be the moduli space of basic stable log maps with $m$ marked points such that the $j$-th marking has maximal contact with divisors $D_i$ for all $i\in I_j$. 
Note that the $j$-th marking maps to the intersection $\cap_{i\in I_j}D_i$. 

\begin{conj}\label{conj-snc}
Let $\beta$ be a curve class of $X$ with $d_i:=D_i\cdot \beta>0$ for $i\in \{1,\ldots, n\}$. 
The following identity is true:
\begin{align}\label{eqn-conj-snc}
   &\left(\cup_{j=1}^m \on{ev}_j^*(\cup_{i\in I_j}D_i)\right)\cap\left[\bM_{0,m}(\oplus_{i=1}^n\sO_X(-D_i),\beta)\right]^{\on{vir}}\\
   \notag =&\left(\prod_{i=1}^n(-1)^{d_i-1}\right)F_*[\bM_{0,0,\{(d_i)\}_{i\in I_1},\ldots, \{(d_i)\}_{i \in I_m}}(X/D,\beta)]^{\on{vir}}.
\end{align}
\end{conj}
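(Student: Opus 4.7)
The plan is to extend the approach used for Theorem \ref{thm-local-rel} in the smooth divisor case to the simple normal crossing setting, using the multi-root stack mirror theorem (Theorem \ref{thm-mirror-intro}) in place of the relative mirror theorem. I would first prove the orbifold counterpart of \eqref{eqn-conj-snc}, in which the log moduli space $\bM_{0,0,\{(d_i)\}_{i\in I_1},\ldots,\{(d_i)\}_{i\in I_m}}(X/D,\beta)$ is replaced by the analogous moduli of twisted stable maps to $X_{D,\vec r}$ carrying $m$ twisted marked points, the $j$-th of which has age $d_i/r_i$ along $D_i$ for every $i\in I_j$. Conjecture \ref{conj-snc} would then follow in the large $\vec r$ limit in those cases where the orbifold--log comparison is available.

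The algebraic heart of the argument is a direct comparison of two $I$-functions. From Theorem \ref{thm:mirror-extended} I would extract the coefficient of the monomial $\prod_{j=1}^m\prod_{i\in I_j}x_{i,d_i}$ in the extended $I$-function $I_{X_{D,\vec r}}$, mirroring the derivation used to prove Theorem \ref{thm-local-rel}; its hypergeometric modification has the form
\[
\prod_{i=1}^n\prod_{0<k<d_i}\left(D_i+\frac{k}{r_i}z\right).
\]
On the other side, the standard mirror theorem for $\oplus_{i=1}^n\sO_X(-D_i)$ produces an $I$-function whose hypergeometric modification, after capping against $\cup_{j=1}^m\on{ev}_j^*(\cup_{i\in I_j}D_i)$, takes the form
\[
\left(\prod_{i=1}^n D_i\right)\prod_{i=1}^n\prod_{0<k<d_i}(-D_i+kz).
\]
The two differ by the sign $\prod_{i=1}^n(-1)^{d_i-1}$ together with terms of order $O(z/r_i)$ that disappear in the large $\vec r$ limit. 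The partition $I_1,\ldots,I_m$ is encoded in the choice of twisted sector: the multi-index whose nonzero entries are exactly $\{d_i\}_{i\in I_j}$ corresponds to a marking of type $I_j$, so the prescribed distribution of tangencies among the $m$ markings is visible on the orbifold side as a choice of box element.

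Next I would convert this $I$-function identity into an identity of invariants by Birkhoff factorization, since both $I$-functions lie on their respective Givental Lagrangian cones. Equating the coefficients of the distinguished monomial in the two $J$-functions yields the orbifold version of \eqref{eqn-conj-snc}. The insertion $\cup_{j=1}^m\on{ev}_j^*(\cup_{i\in I_j}D_i)$ on the local side arises precisely to absorb the factor $\prod_{i=1}^n D_i$ that distinguishes the two hypergeometric modifications, in the same way that the insertion $D\cdot \gamma$ absorbed a factor of $D$ in Theorem \ref{thm-local-rel}.

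The hard part is passing from orbifold to log invariants. For a smooth divisor this step is carried out in \cite{ACW}, \cite{TY}, \cite{FWY}, but for general simple normal crossing $D$ the orbifold--log equivalence is not known and, as emphasized in the discussion following Conjecture \ref{conj-large-r}, may fail. I therefore expect to prove only the orbifold version of Conjecture \ref{conj-snc} for all $\vec r$ sufficiently large; the full log statement would then follow in those cases (for example, the configurations on $\mathbb P^2$ and $\mathbb P^1\times\mathbb P^1$ of Examples \ref{ex-p-2-orb-inv} and \ref{ex:p1-p1}) where the orbifold--log comparison has been independently established.
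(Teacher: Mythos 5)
The statement you're attempting to prove is stated in the paper as Conjecture \ref{conj-snc}; the paper does not prove it. What the paper does is formulate the orbifold analogue Conjecture \ref{conj-local-orb} (with log Gromov--Witten invariants replaced by orbifold invariants of $X_{D,\vec r}$ for $\vec r$ large) and then prove two special cases at the level of invariants: Theorem \ref{thm-local-orb} ($m=1$, via the non-extended $I$-function) and Theorem \ref{thm-local-orb-ext} ($|I_j|=1$ for all $j$, via the extended $I$-function). The general partition, and more seriously the passage from orbifold to log, remain open in the paper, which explicitly declines to conjecture equality of orbifold and log invariants for normal crossing $D$.

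Your plan matches the paper's approach for the parts it does prove, and you correctly identify the orbifold--log comparison as the unbridged step. Two technical issues. First, the hypergeometric modification of the coefficient of $\prod_{i=1}^n x_{i,d_i}$ in the extended $I$-function of $X_{D,\infty}$ is $\prod_{i=1}^n\prod_{0<a\leq d_i}(D_i + az)$ with integer $a$ up to and including $d_i$, not the $\prod_{0<k<d_i}(D_i + \tfrac{k}{r_i}z)$ you wrote (the latter would degenerate to a power of $D_i$ in the large $\vec r$ limit). Second, extracting the coefficient of $\prod_{j=1}^m\prod_{i\in I_j}x_{i,d_i}$ in the paper's extended $I$-function does not encode the partition: since $\cup_j I_j=\{1,\ldots,n\}$, this monomial equals $\prod_{i=1}^n x_{i,d_i}$ for every partition, and it produces $n$ markings each tangent to a single $D_i$, i.e.\ only the $|I_j|=1$ case. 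To capture a general partition one needs extended data consisting of twisted sectors supported on the intersections $\cap_{i\in I_j}D_i$, with one variable per such sector; the paper explicitly declines to write down this more general $I$-function (see the remark after Theorem \ref{thm:mirror-extended}). So your sketch, as written, recovers only what Theorems \ref{thm-local-orb} and \ref{thm-local-orb-ext} already establish, and even the general orbifold statement Conjecture \ref{conj-local-orb} remains open before one reaches the orbifold--log issue.
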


We can allow more interior markings on both sides of Equation (\ref{eqn-conj-snc}) as well. Note that the original conjecture of \cite{vGR} is a special case of our conjecture when $|I_j|=1$ for all $j$. 
The local mirror theorem relevant to this conjecture is also well-known. However there is no known mirror theorem for log Gromov-Witten invariants. Furthermore, Dhruv Ranganathan and Navid Nabijou recently discovered some counter-examples which show that neither Conjecture \ref{conj-snc} nor Conjecture \ref{conj:log_local} is true in fully generality.

Instead of considering log invariants, we will consider it from a different perspective. We conjecture the following relation between local invariants and orbifold invariants.
\begin{conj}\label{conj-local-orb}
Let $\beta$ be a curve class of $X$ with $d_i:=D_i\cdot \beta>0$ for $i\in \{1,\ldots, n\}$. The following identity is true:
\begin{align}
   &\left(\cup_{j=1}^m \on{ev}_j^*(\cup_{i\in I_j}D_i)\right)\cap\left[\bM_{0,m}(\oplus_{i=1}^n\sO_X(-D_i),\beta)\right]^{\on{vir}}\\
   \notag =&\left(\prod_{i=1}^n(-1)^{d_i-1}\right)F_*[\bM_{0,0,\{(d_i)\}_{i\in I_1},\ldots, \{(d_i)\}_{i \in I_m}}(X_{D,\vec r},\beta)]^{\on{vir}},
\end{align}
when the $r_i$'s are sufficiently large.
\end{conj}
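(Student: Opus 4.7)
The plan is to mirror the strategy of the smooth-divisor case (Theorem \ref{thm-local-rel}, Section \ref{sec:sm_div}), replacing the relative $I$-function with the orbifold $I$-function of $X_{D,\vec r}$ provided by Theorem \ref{thm-mirror-intro}. I would combine three inputs: (i) the extended $I$-function $I_{X_{D,\vec r}}$ on Givental's Lagrangian cone $\Ll_{X_{D,\vec r}}$; (ii) the Coates--Givental twisted mirror theorem, which places the local $I$-function of $\oplus_{i=1}^n \sO_X(-D_i)$ on the Euler-twisted cone; and (iii) Birkhoff factorization, which extracts the $J$-function, and hence individual invariants, from any slice of the cone.

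First I would write down the extended $I$-function of $X_{D,\vec r}$ and single out the component indexed by an $m$-tuple of twisted sectors compatible with the partition $I_1,\ldots,I_m$: the $j$-th factor should live in the twisted sector whose age vector has $d_i/r_i$ in direction $i$ for each $i\in I_j$, with insertion pulled back from $\cap_{i\in I_j}D_i$. Isolating this component amounts to extracting a specific monomial in the extension variables (one variable per subset in the partition), exactly as in the one-divisor calculation that yields Theorem \ref{thm-local-rel} via Section \ref{sec:extend-rel-I}.

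Next I would take the large $\vec r$ limit of the chosen component. Each root-stack hypergeometric factor attached to $D_i$ has poles and zeros controlled by $D_i/r_i$ shifts; at contact order $d_i$ these collapse, as $r_i\to\infty$, to the Euler-twisted factor $\prod_{k=0}^{d_i-1}(D_i-kz)$ defining the local $I$-function of $\sO_X(-D_i)$, up to a combinatorial sign $(-1)^{d_i-1}$ coming from the limits of the numerator/denominator Gamma-like products. Taking the product over $i\in\{1,\ldots,n\}$ then produces precisely the prefactor $\prod_{i=1}^n(-1)^{d_i-1}$ appearing in (\ref{eqn-conj-snc}); nefness of the $D_i$ guarantees convergence of the limit and the absence of new poles in $z$. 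Identifying the resulting orbifold slice with the local slice and applying Birkhoff factorization then transfers the identity to the level of $J$-functions and hence to the individual descendent invariants appearing on the two sides of the conjecture.

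The main obstacle, and the reason the paper only proves a restricted version of the conjecture, is the multi-marking structure when some $|I_j|>1$. One must verify that the chosen monomial in the extension variables isolates a single orbifold invariant rather than a sum mixing configurations whose markings are redistributed among different twisted sectors, and one must control cross terms between distinct $D_i$ in the hypergeometric modifications so that the factor-by-factor large-$\vec r$ limit is genuinely legitimate. A complete proof for arbitrary partitions $\{I_j\}$ would likely require either a refined analysis of the multi-age twisted sectors of $X_{D,\vec r}$ along $\cap_{i\in I_j}D_i$, or a new mirror-type identity linking these orbifold invariants directly to log Gromov--Witten invariants, neither of which is presently available.
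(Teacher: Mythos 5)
This statement is a \emph{conjecture} in the paper, and the paper does not prove it in full generality; it only establishes two special cases, namely $m=1$ (Theorem \ref{thm-local-orb}, via the non-extended $I$-function, using that the sole distinguished marking must land over $\cap_{i=1}^n D_i$) and $m=n$ with all $|I_j|=1$ (Theorem \ref{thm-local-orb-ext}, via the extended $I$-function with extension variables $x_{ij}$ attached to twisted sectors over the individual $D_i$). Your outline correctly reproduces the paper's mirror-theoretic strategy---extended $I$-function of $X_{D,\vec r}$ on the Lagrangian cone, large-$\vec r$ limit collapsing the root-stack hypergeometric factors to the Euler-twisted local factors with the sign $\prod_i(-1)^{d_i-1}$, identification of $I$-function slices, Birkhoff factorization to pass to invariants---and you are right that this does not yield the full conjecture.

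One refinement to your diagnosis of the obstacle: you locate the difficulty at ``some $|I_j|>1$,'' but the paper \emph{does} handle $m=1$ where $|I_1|=n>1$, precisely because in that case the non-extended $I$-function already has its distinguished marking in the right twisted sector over $\cap_i D_i$. The genuine gap is the intermediate partitions, where one needs an extended $I$-function whose extension data is indexed by twisted sectors over $\cap_{i\in I_j}D_i$ for the various $j$; the paper explicitly remarks that such more general extended $I$-functions exist in principle (as the extended data is a choice of subset of box elements of the toric stack bundle) but declines to write them down. Your concern about cross-terms and whether the chosen monomial isolates a single invariant is well placed, but it is a matter of bookkeeping in the more elaborate extended $I$-function rather than a conceptual obstruction; the deeper unresolved issue, which the paper also leaves open, is whether the resulting orbifold invariants agree with the log invariants appearing in Conjecture \ref{conj-snc}.
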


Conjecture \ref{conj-local-orb} can be proved at the level of invariants for a special case by explicit computation on both sides via mirror theorems. Using the non-extended $I$-function of $X_{D,\infty}$, we have the following result when $m=1$.

\begin{thm}\label{thm-local-orb}
Suppose  the intersection of the divisors $\cap_{i=1}^n D_i$ is not empty. Let $\beta$ be a curve class of $X$ with $d_i:=D_i\cdot \beta>0$ for $i\in \{1,\ldots, n\}$. Then the following equality holds:
\[
\left\langle \prod_{i=1}^l[\gamma_i]_0, [\iota^*\gamma]_{\vec d}\bar{\psi}^a\right\rangle_{0,l,(\vec d),\beta}^{X_{D,\infty}}=\left(\prod_{i=1}^n(-1)^{d_i-1}\right)\left\langle \prod_{i=1}^l \gamma_i, (\cup_{i=1}^n D_i)\cdot \gamma \bar{\psi}^a \right\rangle_{0,l+1,\beta}^{\oplus_{i=1}^n\sO_X(-D_i)}.  
\]
\end{thm}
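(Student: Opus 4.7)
The plan is to follow the same strategy used to prove Theorem \ref{thm-local-rel} in the smooth divisor case: write down both $I$-functions explicitly, identify them (up to an explicit sign and cup-product factor) in the large $\vec r$ limit, and then read off the equality of invariants from the mirror theorems on both sides. The orbifold side is controlled by Theorem \ref{thm-mirror-intro}, and the local side is controlled by the standard Coates--Givental twisted mirror theorem for $\oplus_{i=1}^n \sO_X(-D_i)$.

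First I would isolate the piece of the non-extended $I$-function of $X_{D,\vec r}$ from (\ref{I-orb}) living in the inertia-stack sector indexed by the twisting $(d_1/r_1,\ldots,d_n/r_n)$. The hypothesis $\cap_{i=1}^n D_i\neq \emptyset$ is exactly what ensures this component of the rigidified inertia stack is nonempty, so that the class $[\iota^*\gamma]_{\vec d}$ is well defined. In this sector, the hypergeometric factor associated to each $D_i$ simplifies, and after taking $r_i\to\infty$ the stacky normalizations drop out leaving a factor of the form $D_i\cdot \prod_{0<k<d_i}(D_i-kz)$ (the leading $D_i$ appearing because of the restriction to the tangency sector).

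On the local side, the twisted $I$-function for $\oplus_{i=1}^n\sO_X(-D_i)$ carries, in the same curve class $\beta$, a modification factor $\prod_{i=1}^n\prod_{0\le k\le d_i-1}(-D_i-kz)$. Pulling out the sign gives
\[
\prod_{i=1}^n\prod_{0\le k\le d_i-1}(-D_i-kz)=\left(\prod_{i=1}^n(-1)^{d_i}\right)\prod_{i=1}^n\prod_{0\le k\le d_i-1}(D_i+kz),
\]
so the two modification factors agree up to the sign $\prod_i(-1)^{d_i-1}$ and a multiplication by $\prod_i D_i$ (which corresponds on the local side to inserting the extra marked point carrying the class $\cup_{i=1}^n D_i$). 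All other ingredients of the two $I$-functions---namely the ambient hypergeometric factor coming from the ambient toric stack bundle and the Novikov variable---coincide in the limit.

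With the $I$-functions matched, the equality of invariants follows by the standard mirror-theorem argument: both $I$-functions are slices of their respective Givental cones by Theorem \ref{thm-mirror-intro} and by the local mirror theorem, and coefficient extraction against $\prod_i \gamma_i$ and $\bar\psi^a$ on the $1/(z(z-\bar\psi))$-type terms converts the identification of $I$-functions into the claimed identity of invariants, with the extra marked point on the local side carrying $\gamma\cdot \cup_i D_i$ matching the pullback $[\iota^*\gamma]_{\vec d}$ on the orbifold side. The main obstacle is the careful combinatorial bookkeeping in the $\vec r\to\infty$ limit: one must verify that all powers of $r_i$ cancel correctly between the numerator and denominator of (\ref{I-orb}), that no non-maximal-tangency sector contributes in the limit, and that the formal invariants of $X_{D,\infty}$ appearing on the left-hand side are defined via the same polynomiality and constant-term procedure used in Section \ref{sec:limit}. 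Once this accounting is made consistent, the identification is explicit.
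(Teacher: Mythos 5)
Your proposal takes essentially the same approach as the paper: isolate the sector of the non-extended $I$-function of $X_{D,\vec r}$ landing on $\cap_i D_i$, pass to the $\vec r\to\infty$ limit to obtain $I_{X_{D,\infty}}$, identify it via $\iota_!$ with $\left(\prod_i(-1)^{d_i-1}\right)$ times the equivariant local $I$-function of $\oplus_i\sO_X(-D_i)$ at $\lambda_i=0$, and then extract invariants using the two mirror theorems together with the Birkhoff factorization of Coates--Givental to handle extra markings and nontrivial mirror maps. One small slip worth flagging: you wrote the orbifold hypergeometric factor as $D_i\cdot\prod_{0<k<d_i}(D_i-kz)$, but the sign should be $D_i\cdot\prod_{0<k<d_i}(D_i+kz)$ as in (\ref{I-snc}); also, the leading factor $\prod_i D_i$ arises from the pushforward $\iota_!$ from $\cap_i D_i$ rather than directly from restriction to the tangency sector, though these are two descriptions of the same bookkeeping.
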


Using the extended $I$-function, we have the following identity which is a slight generalization of Conjecture \ref{conj:log_local} at the level of invariants and replace log invariants by orbifold invariants.
\begin{thm}\label{thm-local-orb-ext}
Let $\beta$ be a curve class of $X$ with $d_i:=D_i\cdot \beta>0$ for $i\in \{1,\ldots, n\}$. Then
the following identity is true:
\begin{align*}
&\left\langle\prod_{i=1}^l [\gamma_i]_0, \prod_{i=1}^n [\textbf 1]_{(0,\ldots,0,d_i,0,\ldots,0)}, [\gamma]_{0}\bar{\psi}^a\right\rangle_{0,l+1,(d_1),\ldots, (d_n),\beta}^{X_{D,\infty}}\\
=&\left(\prod_{i=1}^n (-1)^{d_i-1}\right)\left\langle \prod_{i=1}^l \gamma_i, \prod_{i=1}^nD_i,\gamma\bar{\psi}^a \right\rangle_{0,n+l+1,\beta}^{\bigoplus_{i=1}^n \mathcal O_X(-D_i)},  
\end{align*}

where $\gamma,\gamma_i\in H^*(X)$ for $i=1,\ldots, l$.
\end{thm}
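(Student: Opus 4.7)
The plan is to extend the identification-of-$I$-functions argument used for the smooth-divisor Theorem~\ref{thm-local-rel} in Section~\ref{sec:extend-rel-I}, replacing the extended relative $I$-function by the extended $I$-function $I_{X_{D,\vec r}}$ of the multi-root stack constructed via Theorem~\ref{thm-mirror-intro} (or rather its extended version Theorem~\ref{thm:mirror-extended}), and then passing to the large $\vec r$ limit to obtain the formal $I$-function of $X_{D,\infty}$. In parallel, the $I$-function of the total space $\bigoplus_{i=1}^n \mathcal O_X(-D_i)$ is obtained from the $I$-function of $X$ by the standard Euler-twist factor $\prod_{i=1}^n \prod_{k=1}^{d_i}(-D_i - kz)$ in the numerator (and $\prod_{k=0}^{\infty}$ in the denominator, with cancellation). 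Since all $D_i$ are nef, this local $I$-function equals the $J$-function of the bundle modulo mirror transformation, and the standard mirror theorem (e.g.\ \cite{g}) recovers its invariants.

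First I would choose the extension variables $x_{i,d_i}$ on the orbifold side so that the degree-one-in-$x_{i,d_i}$ term of $I_{X_{D,\vec r}}$ produces, in the twisted sector indexed by $(0,\ldots,0,d_i,0,\ldots,0)$, the insertion $[\mathbf{1}]_{0,\ldots,d_i,\ldots,0}$. Taking the coefficient of $\prod_{i=1}^n x_{i,d_i}$ and of a further extension variable for the $[\gamma]_0 \bar{\psi}^a$ marking (together with the ambient insertions $\prod_{i=1}^l[\gamma_i]_0$), the contribution to $I_{X_{D,\vec r}}$ in class $\beta$ is a product of factors of the form $\prod_{k}\tfrac{1}{(k/r_i)z +\text{classes}}$, which after the large $\vec r$ limit collapses to the finite product $\prod_{i=1}^n \prod_{k=1}^{d_i-1}(D_i + kz)/((d_i-1)!\, z^{d_i-1})$ times $\tfrac{1}{d_i\, z}$ coming from the extremal factor. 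Comparing this with the analogous coefficient in the local $I$-function, whose $\beta$-contribution carries the factor $\prod_{i=1}^n \prod_{k=1}^{d_i}(-D_i - kz)$, and using $\prod_{k=1}^{d_i-1}(D_i+kz) \cdot (-1)^{d_i}\prod_{k=1}^{d_i}(D_i+kz)^{-1} \cdot D_i = (-1)^{d_i-1}D_i/d_i$ after the Birkhoff-factorization step, gives exactly the sign $\prod_{i=1}^n (-1)^{d_i-1}$ and an extra factor $D_i$ for each divisor on the local side.

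The routine part is then the Birkhoff decomposition: $I$-functions lie on Givental's Lagrangian cone, so the identified $z$-polynomial coefficients at appropriate asymptotic expansion yield the $J$-functions and thus descendant invariants, producing the marking $D_i$ on the local side from each extension variable $x_{i,d_i}$ and the $\bar{\psi}^a$ insertion from the extension in $[\gamma]_0$. The main obstacle I expect is the bookkeeping of the mirror transformation: one must verify that the mirror map used to bring $I$ to $J$ on the orbifold side does not mix into the chosen coefficient, which should follow because the extension variables $x_{i,d_i}$ are attached to twisted sectors disjoint from the untwisted $H^*(X)$-valued mirror map, and that the large $\vec r$ limit of the mirror map is well defined in the sense of Conjecture~\ref{conj-large-r} for the invariants at hand. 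Once this is checked, extracting the $\prod_i x_{i,d_i}$ coefficient and equating with the local $I$-function side yields the claimed identity.
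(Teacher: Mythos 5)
Your overall strategy — extracting the $\prod_i x_{i,d_i}$-coefficient of the large-$\vec r$ extended orbifold $I$-function, comparing it with the local $I$-function of $\bigoplus_i \mathcal O_X(-D_i)$, then invoking Birkhoff factorization to pass to $J$-functions — is the same as the paper's, but several details are off in ways that break the argument.

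First, the Euler-twist factor is wrong. For $\mathcal O_X(-D_i)$ (concave, nef $D_i$) the local $I$-function of the paper carries the product $\prod_{0\le a<d_i}(-D_i+\lambda_i-az)$, which at $a=0$ contributes the factor $(-D_i+\lambda_i)$. You instead wrote $\prod_{k=1}^{d_i}(-D_i-kz)$, which is shifted by one and in particular does not contain a bare $-D_i$ factor. That factor is essential: the paper's mechanism for producing the $n$ extra insertions $D_1,\ldots,D_n$ on the local side is to apply the operator $\prod_{i=1}^n\bigl(\sum_j c_{ij}\,t_j\partial_{t_j}\bigr)$ (with $D_i=\sum_j c_{ij}\phi_j$) to $I_{\oplus_i\mathcal O_X(-D_i)}$, which by the divisor equation multiplies the $\beta$-summand by $\prod_i \tfrac{D_i+d_iz}{z}$, and then to divide by $\prod_i(D_i+\lambda_i)$ before setting $\lambda_i=0$; the $(D_i+\lambda_i)$ in the denominator cancels the $a=0$ factor of the twist. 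Without that device you have no way to relate the $(n+1)$-insertion local correlator to the single $\beta$-coefficient of the $(l+1)$-marked local $J$-function, and your ``analogous coefficient in the local $I$-function'' is undefined, since the local $I$-function carries no extension variables.

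Second, the distinguished insertion $[\gamma]_0\bar\psi^a$ does not come from a ``further extension variable'': it is the distinguished (non-extended) marking of the $J$-function, read off from the Lagrangian-cone expansion in powers of $1/z$. Introducing an extra extension variable for it is not what happens. Third, the displayed cancellation $\prod_{k=1}^{d_i-1}(D_i+kz)\cdot(-1)^{d_i}\prod_{k=1}^{d_i}(D_i+kz)^{-1}\cdot D_i = (-1)^{d_i-1}D_i/d_i$ is not correct as an identity of rational functions in $z$ (the left side is $(-1)^{d_i}D_i/(D_i+d_iz)$), and it does not match the actual bookkeeping, where the orbifold-side coefficient is $z^{-n}\prod_i\prod_{0<a\le d_i}(D_i+az)$ and the needed manipulation of the local side is via the derivative operator above. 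Finally, your worry about the mirror map is legitimate, but the paper handles it simply: under Assumption-free nefness the identification is at the level of $I$-functions with full $t$-dependence, and the Birkhoff factorization of \cite{CG} is applied uniformly to both sides after the $I$-functions are identified, so no separate check of ``non-mixing'' is needed. So the approach is right in spirit, but the twist factor, the mechanism that produces the $D_i$ insertions, and the explicit cancellation all need to be replaced with the paper's derivative-plus-division argument.
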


It is natural to believe that one should replace Conjecture \ref{conj:log_local} and Conjecture \ref{conj-snc} by Conjecture \ref{conj-local-orb}. 

 \begin{rmk}
 Our result suggests a close relationship between the theory obtained from the large $r_i$ limit of the Gromov--Witten theory of $X_{D,\vec r}$ and log Gromov--Witten theory of $(X,D)$, which would generalize the main results of \cite{TY}. However, as pointed out by Dhruv Ranganathan, orbifold invariants and log invariants are not the same in general. It would be interesting to find their precise relation and determine when they will coincide.
 
 In the case that the root stack invariants equal to the log invariants, Conjecture \ref{conj-local-orb} implies Conjecture \ref{conj-snc}. Then results in \cite{BBv} and \cite{NR} would be special cases of our result. First of all, we do not require $X$ to be toric. Secondly, $D$ is not necessary $-K_X$ or a toric divisor. Last but not least, we put the divisor classes $D_i$ as insertions of local Gromov--Witten invariants of $\bigoplus_{i=1}^n \mathcal O_X(-D_i)$ instead of removing them using divisor equations. This allows descendant classes which were not allowed previously in \cite{vGR} unless the class $\gamma$ is Poincar\'e dual of a cycle class which does not meet $D_i$. 
\end{rmk}

\begin{rmk}
One can obtain more general results about the relation between local invariants $\bigoplus_{i=1}^n \mathcal O_X(-D_i)$ and invariants of $X_{D,\infty}$ by manipulating their $I$-functions. However, identification between their $I$-functions can already be viewed as identifying local Gromov--Witten theory of $\bigoplus_{i=1}^n \mathcal O_X(-D_i)$ as a sub-theory of Gromov--Witten theory of $X_{D,\infty}$.
\end{rmk}

\subsection{Quantum periods and classical periods}

The Fano search program studies a new approach to the classification of Fano manifolds by studying their mirror Landau--Ginzburg models. The quantum period $G_X$ of a Fano variety $X$ is a generating function for certain Gromov--Witten invariants of $X$ which plays an important role in the Fano search program. Mirror symmetry for Fano varieties suggests an equivalence between the regularized quantum period $\hat{G}_X$ of a Fano variety and the classical period $\pi_W$ of its mirror Landau--Ginzburg potential $W$.

The Frobenius structure conjecture of \cite{GHK} and the construction of \cite{CPS} suggest a precise way of constructing the Landau--Ginzburg potential $W$. A classical period $\pi_W$ can be defined in terms of the constant terms of powers of $W$. A Landau--Ginzburg potential $W$ can be said to be a mirror of a Fano variety $X$ if their respective regularized quantum period and the classical period coincide:
\[
\hat{G}_X=\pi_W.
\]

We recalled that relative quantum coomology of a pair $(X,D)$ provides a ring structure to the state space (ring of insertions) of the Gromov--Witten theory of $(X,D)$. Similar to quantum cohomology for absolute Gromov--Witten theory, the relative quantum product is given by genus zero Gromov--Witten invariants of the pair $(X,D)$. According to the Frobenius structure conjecture \cite{GHK}, the Landau--Ginzburg potential $W$ is defined as
\begin{align}\label{W-theta}
W:=\vartheta_{[D_1]}+\cdots+\vartheta_{[D_n]},
\end{align}
where $D_1,\ldots, D_n$ are irreducible components of $D\in |-K_X|$ and the $\vartheta$'s are theta functions which form a canonical basis of $QH^0_{\log}(X,D)$--the degree $0$ subalgebra of the relative quantum cohomology ring $QH^*_{\log}(X,D)$.
Since the superpotential is given by the theta functions and the theta functions are defined in terms of log Gromov--Witten invariants in intrinsic mirror symmetry, coefficients of the classical period of the superpotential $W$ are written in terms of log Gromov--Witten invariants. We refer to Section \ref{sec:frobenius} for the precise definition of the classical periods. We compute the corresponding orbifold Gromov--Witten invariants of $X_{D,\infty}$ instead of log Gromov--Witten invariants and prove the following mirror equivalence.

\begin{thm}\label{thm:period}
Given a Fano variety $X$ and a divisor $D\in |-K_X|$ satisfying Assumption \ref{assumption}, the regularized quantum period and the classical period coincide 
\[
\hat{G}_X=\pi_W,
\]
where we replace the relevant log Gromov--Witten invariants with the corresponding orbifold Gromov--Witten invariants of $X_{D,\infty}$.
\end{thm}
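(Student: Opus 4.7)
The plan is to compute both sides of the claimed identity $\hat{G}_X=\pi_W$ in terms of an explicit family of genus-zero orbifold Gromov--Witten invariants of $X_{D,\infty}$, and then match them via the local--orbifold principle of Section~\ref{sec:sm_div} together with the local mirror theorem for $\bigoplus_{i=1}^n\sO_X(-D_i)$.

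First I would unfold $\pi_W$. Writing $W=\vartheta_{[D_1]}+\cdots+\vartheta_{[D_n]}$ and applying the multinomial theorem, the constant term of $W^d$ is
\[
\sum_{d_1+\cdots+d_n=d}\binom{d}{d_1,\ldots,d_n}\bigl(\vartheta_{[D_1]}^{d_1}\cdots\vartheta_{[D_n]}^{d_n}\bigr)\bigr|_{\vartheta_0}.
\]
By the Frobenius structure conjecture, each constant-term coefficient on the right is a genus-zero log invariant of $(X,D)$ with maximal contact of order $d_i$ along $D_i$ at a distinguished marked point; the hypothesis of the theorem replaces each such log invariant with the orbifold invariant of $X_{D,\infty}$ of the shape appearing on the left-hand side of Theorem~\ref{thm-local-orb-ext}. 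After this replacement the coefficient of $t^d$ in $\pi_W$ is an explicit sum, over partitions $d_1+\cdots+d_n=d$, of extended orbifold invariants of $X_{D,\infty}$ indexed by the curve class $\beta$ with $D_i\cdot\beta=d_i$.

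Next I would apply Theorem~\ref{thm-local-orb-ext} term by term. This converts each orbifold invariant into a local Gromov--Witten invariant of $\bigoplus_{i=1}^n\sO_X(-D_i)$ carrying the insertion $\prod_i D_i$ at one of the marked points and the sign $\prod_i(-1)^{d_i-1}$. Because $D_1+\cdots+D_n\in|-K_X|$, the Euler class of the obstruction bundle $\bigoplus_i R\pi_*\sO(-D_i)$ on $\bM_{0,1}(X,\beta)$ has the right total degree for dimension reasons to reduce, via the local mirror theorem (as in, e.g.,~\cite{g}), to the genus-zero one-pointed descendant point invariants of $X$ that define the quantum period $G_X$. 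The remaining bookkeeping is to absorb the multinomial coefficient $\binom{d}{d_1,\ldots,d_n}$, the signs $\prod_i(-1)^{d_i-1}$, and the factorials produced by the extended $I$-function into the single factor $d!$ that regularizes $G_X$ to $\hat G_X$; this is a standard combinatorial identity after grouping curve classes with $-K_X\cdot\beta=d$.

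The main obstacle is the first step: making the orbifold replacement of the Frobenius structure precise and showing that iterated products of theta functions in orbifold quantum cohomology, evaluated via the extended $I$-function of Theorem~\ref{thm:mirror-extended}, correctly capture the multi-contact invariants that feed into Theorem~\ref{thm-local-orb-ext}. Concretely, one has to check under Assumption~\ref{assumption} that only the pure maximal-contact sectors, one at each $D_i$, survive in the constant term of $W^d$, so that no auxiliary descendant or interior-marked corrections appear beyond those handled by the extended mirror theorem. Once this is verified, the second and third steps are essentially formal consequences of the mirror theorems already available.
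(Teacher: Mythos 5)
Your proposal takes a genuinely different route from the paper, but it contains a substantive misreading that blocks it from going through as written.

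The key difficulty is in your identification of the invariants appearing in $\pi_W$. The constant-term coefficient $c_{W,d,0}$ of $W^d=(\vartheta_{[D_1]}+\cdots+\vartheta_{[D_n]})^d$ is a sum of $(d+1)$-point log (or here orbifold) invariants $N_\beta(\mathbf{q}_{d_1,\ldots,d_n})$ in which there are $d=\sum_i d_i$ relative markings, each with \emph{contact order $1$}, with exactly $d_i$ of them touching $D_i$, together with one interior marking carrying $[pt]\psi^{d-2}$. This is precisely why the paper inserts the footnote stressing that the relative markings here have ``minimal'' rather than ``maximal'' tangency. Your proposal instead asserts that these are maximal-contact invariants (one marking with contact order $d_i$ along each $D_i$) and then routes the computation through Theorem~\ref{thm-local-orb-ext} and the local mirror theorem for $\bigoplus_i\sO_X(-D_i)$. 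That theorem concerns a different flavor of invariants, and moreover the local Gromov--Witten invariants of $\bigoplus_i\sO_X(-D_i)$ carry an Euler-class factor of cohomological degree $\sum_i(d_i-1)=d-n$, which does not match the bare descendant point invariants $\int_{[\bM_{0,1}(X,\beta)]^{\mathrm{vir}}}\psi^{d-2}\ev^*[pt]$ defining $G_X$. So the chain from $\pi_W$ through maximal-contact orbifold invariants to local invariants does not reach the quantum period.

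The paper's actual proof bypasses local Gromov--Witten theory entirely. It specializes the $\vec r\to\infty$ limit of the extended $I$-function of Theorem~\ref{thm:mirror-extended} to extended data with $x_{ij}=0$ for $j>1$ (so that only contact-order-one markings appear, matching the structure of $W^d$), sets $t=0$, and uses Assumption~\ref{assumption} via the preceding Lemma to show the mirror map is trivial, hence $I_{X_{D,\infty}}=J_{X_{D,\infty}}$. Extracting the coefficient of $\bigl(\prod_i x_i^{d_i}\bigr)\mathbf{1}\in H^0(X)$ on the $I$-side gives $\langle[pt]\psi^{d-2}\rangle^X_{0,1,\beta}$ (all factors in $\prod_i\prod_{0<a\le d_i}(D_i+az)$ cancel against the $z^d\prod d_i!$ in the denominator after projecting to $H^0$), while on the $J$-side one gets $\tfrac{1}{\prod_i d_i!}\,N^{\mathrm{orb}}_\beta(\mathbf{q}_{d_1,\ldots,d_n})$. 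Multiplying by $d!$ and summing over $\beta$ with $-K_X\cdot\beta=d$ produces $\hat G_X=\pi_W$ directly. Your correct first step (the multinomial expansion of $W^d$) and the final factorial bookkeeping are in the right spirit, but without correcting the contact-order identification the middle of the argument fails.
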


The relation between the classical period $\pi_W$ and the classical period of a Laurent polynomial $f$ is explained in \cite{Mandel}*{Section 1.4.1}. One can try to obtain a Laurent polynomial $f$ from the superpotential $W$. According to the Fano search program, Fano varieties might be classified via their mirror Laurent polynomials (up to mutation equivalence). 

\begin{rmk}
In \cite{TY20c}*{Section 7}, we define the degree zero part of the relative quantum cohomology using the formal Gromov--Witten theory of infinite root stacks and use it to construct mirrors following the Gross--Siebert program \cite{GS}. The precise relationship between our mirror construction and Gross--Siebert's construction is not known. However, Theorem \ref{thm:period} provides an evidence that two constructions may be closely related at least when the divisor $D$ is sufficiently degenerated.
\end{rmk}

\subsection{Acknowledgment}

We would like to thank Qile Chen, Michel van Garrel, Mark Gross, Travis Mandel, Dhruv Ranganathan, Matthew Satriano and Mattia Talpo for helpful discussions and comments. H.-H. T. is supported in part by Simons foundation collaboration grant.  F. Y. is supported by a postdoctoral fellowship funded by NSERC and Department of Mathematical Sciences at the University of Alberta.

\section{Preliminary on orbifold Gromov--Witten theory}
In this section, we briefly review the definition of orbifold Gromov--Witten invariants and Givental's formalism. We refer readers to  \cite{AV}, \cite{AGV02}, \cite{AGV}, \cite{CR} and \cite{Tseng} for the foundation of orbifold Gromov--Witten theory.

Let $\mathcal X$ be a smooth proper Deligne--Mumford stack whose coarse moduli space $X$ is projective. Let $\bM_{0,n}(\mathcal X, \beta)$ be the moduli stack of $l$-pointed genus-zero degree $\beta$ stable maps to $\mathcal X$ with sections to gerbes at the markings (see \cite{AGV}*{Section 4.5} and \cite{Tseng}*{Section 2.4}). Recall that the domain curves of a stable map to  a stack can be orbicurves. In other words, the domain curve can have nontrivial stack structures at marked points and nodes. The stable maps are required to respect the stack structures of the domain and the target. The natural evaluation maps land on the inertia stack $I \mathcal X:=\mathcal X\times_{\Delta,\mathcal X\times \mathcal X,\Delta}\mathcal X$, where $\Delta: \mathcal X\rightarrow \mathcal X \times \mathcal X$. The Chen--Ruan orbifold cohomology $H^*_{\on{CR}}(\mathcal X)$ of $\mathcal X$ is the cohomology of the inertia stack $I\mathcal X$ with degree shifted by ages. 
The genus-zero orbifold Gromov--Witten invariants of $\mathcal X$ are defined as follows
\begin{align}
\left\langle \prod_{i=1}^l \tau_{a_i}(\gamma_i)\right\rangle_{0,l,\beta}^{\mathcal X}:=\int_{[\bM_{0,l}(\mathcal X, \beta)]^{w}}\prod_{i=1}^l(\on{ev}^*_i\gamma_i)\bar{\psi}_i^{a_i},
\end{align}
where 
\begin{itemize}
\item  $\gamma_i\in H_{\on{CR}}^*(\mathcal X)$ are cohomological classes.
\item $a_i$ are non-negative integers, for $1\leq i\leq l$.
\item $[\bM_{0,l}(\mathcal X, \beta)]^{w}$ is the the weighted virtual fundamental class in \cite{AGV02}*{Section 4.6} and \cite{Tseng}*{Section 2.5.1}. 
    \item 
For $i=1,2,\ldots,l$,
\[
\on{ev}_i: \bM_{0,l}(\mathcal X,\beta) \rightarrow I\mathcal X
\]
is the evaluation map.
\item
$\bar{\psi}_i\in H^2(\bM_{0,l}(\mathcal X, \beta),\mathbb Q)$
is the descendant class.
\end{itemize}

Let $t_i=\sum_{\alpha} t_{i;\alpha}\phi_\alpha\in H^*_{\on{CR}}(\mathcal X)$, where $t_{i;\alpha}$ are formal variables and
\[
\{\phi_\alpha\}\subset H^*_{\on{CR}}(\mathcal X)
\]
is an additive basis.
The genus-zero Gromov--Witten potential of $\mathcal X$ is 
\[
\mathcal F_{\mathcal X}^0({\bf t}):= \sum_{l, \beta}\frac{Q^{\beta}}{l!}\langle {\bf t,\ldots, t}\rangle_{0,l, \beta}^{\mathcal X},
\]
where $Q^\beta$ is an element of the Novikov ring which is a completion of the group ring $\mathbb C[\on{Eff}(\mathcal X)]$ of the semi group $\on{Eff}(X)$ of effective curve classes (\cite{Tseng}*{Section 2.5.2});
\[
{\bf t}=\sum_{i\geq 0}t_iz^i\in H^*_{\on{CR}}(\mathcal X)[z].
\].

 Givental's formalism for the genus-zero orbifold Gromov--Witten invariants in terms of a Lagrangian cone in Givental's symplectic vector space was developed in \cite{Tseng}.  Givental's symplectic vector space is
\[
\mathcal H:=H^*_{\on{CR}}(\mathcal X,\mathbb C)\otimes \mathbb C[\![\on{NE}(\mathcal X)]\!][z,z^{-1}]\!],
\]
where $\on{NE}(\mathcal X)$ is the Mori cone of $\mathcal X$. The symplectic form on $\mathcal H$ is defined as
\[
\Omega(f,g):=\on{Res}_{z=0}(f(-z),g(z))_{\on{CR}}dz,
\]
where $(-,-)_{\on{CR}}$ is the orbifold Poincar\'e pairing of the Chen--Ruan cohomology $H^*_{\on{CR}}(\mathcal X)$ of $\mathcal X$.

We consider the polarization
\[
\mathcal H=\mathcal H_+\oplus \mathcal H_-,
\]
\[
\mathcal H_+=H^*_{\on{CR}}(\mathcal X,\mathbb C)\otimes \mathbb C[\![\on{NE}(\mathcal X)]\!][z], \quad \mathcal H_-=z^{-1}H^*_{\on{CR}}(\mathcal X,\mathbb C)\otimes \mathbb C[\![\on{NE}(\mathcal X)]\!][\![z^{-1}]\!].
\]
 Givental's Lagrangian cone $\mathcal L_{\mathcal X}$ is defined as the graph of the differential of $\mathcal F^0_{\mathcal X}$ in the dilaton-shifted coordinates. That is,
\[
\mathcal L_{\mathcal X}:=\{(p,q)\in \mathcal H_-\oplus \mathcal H_+| p=d_q\mathcal F^0_{\mathcal X}\} \subset \mathcal H.
\]
The so-called $J$-function is a slice of $\mathcal L_{\mathcal X}$:
\[
J_{\mathcal X}(t,z):=z+t+\sum_{l, \beta}\sum_{\alpha}\frac{Q^{\beta}}{l!}\left\langle \frac{\phi_\alpha}{z-\bar{\psi}},t,\ldots,t\right\rangle^{\mathcal X}_{0,l+1,\beta}\phi^{\alpha},
\]
where 
\[
\{\phi_\alpha\}, \{\phi^\alpha\}\subset H^*_{\on{CR}}(\mathcal X)
\]
are additive bases dual to each other under orbifold Poincar\'e pairing and, 
\[
t=\sum_{\alpha}t^\alpha\phi_\alpha\in H^*_{\on{CR}}(\mathcal X).
\]
One can decompose the $J$-function according to the degree of curves
\[
J_{\mathcal X}(t,z)=\sum_{\beta}J_{\mathcal X, \beta}(t,z)Q^{\beta}.
\]

\section{A mirror theorem for multi-root stacks}

\subsection{A geometric construction of root stacks}\label{sec:construction}

Let $X$ be a smooth projective variety. Let $D:=D_1+D_2+...+D_n$ be a simple normal-crossing divisor with $D_i\subset X$ smooth and irreducible. Let $$\sigma_i:\mathcal{O}_X\to \mathcal{O}_X(D_i)$$ be a section such that $$\sigma_i^{-1}(0)=D_i.$$

We record the following easy property.
\begin{lemma}
 $$X_{D,\vec r}:=X_{(D_1, r_1), (D_2, r_2), ..., (D_n, r_n)}\simeq X_{(D_1, r_1)}\times_X X_{(D_2, r_2)}\times_X...\times_X X_{(D_n, r_n)}.$$
 \end{lemma}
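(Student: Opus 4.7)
The plan is to prove this by comparing universal properties. Both sides represent naturally equivalent moduli functors over $X$, so the isomorphism follows from Yoneda.

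First, I would recall the moduli-theoretic description of each single root stack. For an $X$-scheme $T \to X$, the groupoid $X_{(D_i,r_i)}(T)$ consists of triples $(M_i, \tau_i, \phi_i)$ where $M_i$ is a line bundle on $T$, $\tau_i \in \Gamma(T, M_i)$, and $\phi_i: M_i^{\otimes r_i} \xrightarrow{\sim} \sO_X(D_i)|_T$ is an isomorphism sending $\tau_i^{\otimes r_i}$ to $\sigma_i|_T$. Morphisms are isomorphisms of line bundles compatible with sections and trivializations. Similarly, the multi-root stack $X_{D,\vec r}$ parameterizes $n$-tuples of such data, one triple $(M_i,\tau_i,\phi_i)$ for each $i$, all defined over a common base $T \to X$ (this is essentially the defining moduli problem, see Cadman).

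Next, I would unwind the fiber product on the right-hand side. By the universal property of $2$-fiber products of stacks, an object of
\[
\bigl(X_{(D_1,r_1)}\times_X X_{(D_2,r_2)}\times_X \cdots \times_X X_{(D_n,r_n)}\bigr)(T)
\]
is the data of a morphism $T \to X$ together with a lift to each factor $X_{(D_i,r_i)}$ over $X$, i.e.\ exactly an $n$-tuple of triples $(M_i,\tau_i,\phi_i)$ as above. This yields a tautological equivalence of groupoids with $X_{D,\vec r}(T)$, functorial in $T$, which by $2$-Yoneda upgrades to a canonical isomorphism of stacks over $X$.

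Finally, I would note that both sides are known to be smooth Deligne--Mumford stacks (using that the $D_i$ are smooth and meet transversely, plus $\gcd(r_i,r_j)=1$ for $i\neq j$ as in the paper's setup), so the functorial isomorphism above is an isomorphism of smooth DM stacks; no further smoothness check is required.

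There is essentially no hard step here: the statement is purely formal, and the only thing to be careful about is matching the $2$-categorical data (isomorphisms of line bundles) on both sides. The only mildly non-trivial point is verifying that morphisms in the fiber product groupoid (tuples of isomorphisms $M_i \to M_i'$ compatible with the $\phi_i$) correspond exactly to morphisms in the multi-root groupoid, which is immediate from unwinding definitions.
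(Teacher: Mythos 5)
Your proof is correct and follows essentially the same route as the paper: both arguments compare the groupoids of $T$-valued (resp.\ $S$-valued) points of the two sides and observe that the data defining an object of the multi-root stack is tautologically the same as the data defining an object of the iterated fiber product. The paper states this more tersely (it just lists the data and declares the isomorphism clear), while you spell out the 2-Yoneda/2-fiber-product bookkeeping and add a closing remark on smoothness that is not needed for the lemma itself, but the mathematical content is identical.
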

\begin{proof}
To see this, it suffices to check that the groupoids of $S$-valued points are isomorphic for any scheme $S$.

The $S$-points of the left-hand side consist of $$f: S\to X, \{M_i: \text{ line bundle on }S\}, \{s_i\in H^0(M_i)\}, \{\phi_i:M_i^{\otimes r_i}\to f^*\mathcal{O}_X(D_i)\}$$
such that $s_i^{r_i}=\phi_i^*f^*\sigma_i$ for $i=1,...,n$.

The $S$-points of the $i$-th factor of the right-hand side consist of $$f: S\to X, M_i: \text{ line bundle on }S, s_i\in H^0(M_i), \phi_i:M_i^{\otimes r_i}\to f^*\mathcal{O}_X(D_i)$$
such that $s_i^{r_i}=\phi_i^*f^*\sigma_i$. 

The isomorphism is thus clear. 
\end{proof} 

The inertia stack of the multi-root stack can be described as follows. The coarse moduli spaces of twisted sectors are either $D_i$ or intersections of $D_i$. The isotropy groups are $\mu_{r_i}$ for $D_i$ and $\mu_{r_{i_1}}\times\cdots \times \mu_{r_{i_l}}$ for $\cap_{j=1}^l D_{i_j}$.

Next, we describe root stacks as complete intersections. The case of smooth divisors is treated in \cite{FTY}. 

We will construct the following tower
$$X_n\to ...\to X_2\to X_1\to X.$$
Here $\pi_{i+1}:X_{i+1}\to X_i$ is a $\mathbb{P}^1$-bundle, with $X_{i+1}:=\mathbb{P}(L_i\oplus \mathcal{O}_{X_i})$. Let $$X_{i0}:=\mathbb{P}(\mathcal{O}_{X_i}),\quad X_{i\infty}:=\mathbb{P}(L_i)\subset X_{i+1}.$$
Here $L_i:=\mathcal{O}_X(-D_i)$. Note that we have omitted various pull-backs from the notation.

There are natural isomorphisms $$\on{Hom}(L_i\oplus \mathcal{O}_{X_i}, \mathcal{O}_{X_i})\simeq H^0(X_i, (L_i\oplus \mathcal{O}_{X_i})^*)\simeq H^0(X_i, \pi_{i+1*}\mathcal{O}_{X_{i+1}}(1))\simeq H^0(X_{i+1}, \mathcal{O}_{X_{i+1}}(1)).$$
Let $$\tilde{f}_i\in H^0(X_{i+1}, \mathcal{O}_{X_{i+1}}(1))$$ be the image of $f_i:=(\sigma_i\oplus 1)^*\in \on{Hom}(L_i\oplus \mathcal{O}_{X_i}, \mathcal{O}_{X_i})$ under these isomorphisms.

Locally we have $X_{i0}=\{s_0=0\}$ and $X_{i\infty}=\{s_\infty=0\}$. We see that $$\tilde{f}_i^{-1}(0)=\{s_0\sigma_i(x)+s_\infty=0\}\simeq \sigma_i(X_i)\simeq X_i,$$
$$\tilde{f}_i^{-1}(0)\cap X_{i\infty}=\{s_0\sigma_i(x)+s_\infty=0, s_\infty=0\}\simeq \sigma_i^{-1}(0)=D_i \text{ pulled back to } X_i.$$
Set $$Y:=X_{n, (X_{1\infty}, r_1), (X_{2\infty}, r_2),...,(X_{n\infty}, r_n)}\overset{p_i}{\longrightarrow}X_{i+1}\to X.$$
Then we have
\begin{lemma}
$$\cap_{i=0}^{n-1}p_i^*\tilde{f}_i^{-1}(0)\simeq X_{(D_1, r_1), (D_2, r_2), ..., (D_n, r_n)}=X_{D,\vec r}.$$
\end{lemma}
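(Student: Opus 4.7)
The strategy is to reduce to the single-divisor case already handled in \cite{FTY}, using the fiber-product decomposition
\[
X_{D,\vec r}\simeq X_{(D_1,r_1)}\times_X\cdots\times_X X_{(D_n,r_n)}
\]
from the first lemma of this section. The plan is to match this fiber product, factor by factor, with $\bigcap_{i} p_i^{*}\tilde f_i^{-1}(0)\subset Y$.

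First I would dispose of the single-divisor case. Introduce, for each $i$, the $\mathbb P^1$-bundle $Z_i:=\mathbb P(L_i\oplus \mathcal O_X)\to X$ together with its tautological section $\tilde g_i\in H^0(\mathcal O_{Z_i}(1))$ (defined just as $\tilde f_i$ is in the excerpt, but directly over $X$). The same local computation already carried out above gives $\tilde g_i^{-1}(0)\simeq X$ and $\tilde g_i^{-1}(0)\cap Z_{i\infty}\simeq D_i$. The construction of \cite{FTY} (equivalently, a direct check on $S$-points) then identifies the pullback of $\tilde g_i^{-1}(0)$ to the rooted bundle $Z_{i,(Z_{i\infty},r_i)}$ with $X_{(D_i,r_i)}$: an $S$-point of the pullback is precisely the data $(f:S\to X,\, M_i,\, s_i,\, \phi_i)$ satisfying $s_i^{r_i}=\phi_i^{*}f^{*}\sigma_i$ that defines the functor of points of $X_{(D_i,r_i)}$.

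Next I would recognize the tower as an iterated fiber product over $X$. The relation $X_{i+1}=\mathbb P(L_i\oplus \mathcal O_{X_i}) \simeq Z_i\times_X X_i$ iterates to give $X_n\simeq Z_1\times_X Z_2\times_X\cdots\times_X Z_n$, and, since root stacks along divisors pulled back from different factors commute with the fiber product over $X$, also
\[
Y\simeq Z_{1,(Z_{1\infty},r_1)}\times_X Z_{2,(Z_{2\infty},r_2)}\times_X\cdots\times_X Z_{n,(Z_{n\infty},r_n)}.
\]
Under this identification each $p_i^{*}\tilde f_i$ is pulled back from the $i$-th factor, so $p_i^{*}\tilde f_i^{-1}(0)$ only constrains that factor and, by the single-divisor step, cuts it down to $X_{(D_i,r_i)}$. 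Intersecting over all $i$ yields $X_{(D_1,r_1)}\times_X\cdots\times_X X_{(D_n,r_n)}$, which by the first lemma equals $X_{D,\vec r}$.

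The main technical hurdle is the commutation statement above: ``root stacks along pullbacks of pairwise distinct smooth divisors from different factors commute with the iterated fiber product over $X$.'' I would verify this by a direct $S$-points check, since both sides represent the same groupoid of tuples $(f:S\to X,\{M_i,s_i,\phi_i\}_{i=1}^n)$ subject to $s_i^{r_i}=\phi_i^{*}f^{*}\sigma_i$ — essentially the universal-property computation of the first lemma, applied $n$ times.
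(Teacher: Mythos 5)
The paper states this lemma without any explicit proof: the tower construction, the sections $\tilde f_i$, and the local computations $\tilde f_i^{-1}(0)\simeq X_i$ and $\tilde f_i^{-1}(0)\cap X_{i\infty}\simeq D_i$ (pulled back) are set up immediately above it, and the lemma is then asserted as a consequence, with the single-divisor case handled in \cite{FTY}. Your proposal supplies a valid proof of exactly the kind the paper implicitly has in mind, so there is no ``paper proof'' for it to diverge from.

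Your route is clean and correct. The decomposition $X_{i+1}=\mathbb P(\pi^*L_i\oplus\mathcal O_{X_i})\simeq X_i\times_X Z_i$ (with $Z_i=\mathbb P(L_i\oplus\mathcal O_X)$), hence $X_n\simeq Z_1\times_X\cdots\times_X Z_n$, is immediate because projectivization commutes with base change; the identification $Y\simeq Z_{1,(Z_{1\infty},r_1)}\times_X\cdots\times_X Z_{n,(Z_{n\infty},r_n)}$ follows from the first lemma applied to $X_n$ together with the base-change property of root stacks (the root divisors $X_{i\infty}$ are pulled back from the separate factors $Z_i$), and the $S$-points check you sketch for this commutation is routine. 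Each $\tilde f_i$ on $X_{i+1}$ is the pullback of $\tilde g_i$ on $Z_i$, because $f_i=(\sigma_i\oplus 1)^*$ is already defined over $X$ and the isomorphism $\on{Hom}(L_i\oplus\mathcal O,\mathcal O)\simeq H^0(\mathcal O_{\mathbb P}(1))$ is natural in base change; so $p_i^*\tilde f_i^{-1}(0)$ constrains only the $i$-th factor, and the single-divisor case (the local computation $\tilde g_i^{-1}(0)\cap Z_{i\infty}\simeq D_i$ plus the $S$-points description of $X_{(D_i,r_i)}$) finishes the argument. The only thing I would ask you to make explicit in a final write-up is the base-change statement for root stacks used in step 3, since that is the one ingredient beyond the local computations the paper already records; it is, as you note, an easy universal-property check. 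Note also that the paper's indexing on $\tilde f_i$ and $X_{i+1}$ is off by one relative to the divisors $D_1,\dots,D_n$; your proof implicitly fixes this, which is the correct reading.
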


\subsection{Mirror theorem}\label{sec:mir_thm}
Suppose that $r_1,...,r_n$ are pairwise co-prime, then $X_{D,\vec r}$ is nonsingular and its Gromov-Witten theory is well-defined.

Now assume $D_i$ are nef. Let
\[
h_i=c_1(\mathcal O_{X_i}(1)).
\]
Then, by the mirror theorem for toric fibrations \cite{Brown}, the $I$-function for $X_n$ is
\begin{align*}
    I_{X_n}(Q,q,t,z)=&e^{(\sum_{i=1}^n h_i\log q_i)/z}\sum_{\beta\in \on{NE}(X)}\sum_{d_i\geq 0, 1\leq i \leq n}J_{X,\beta}(t,z)Q^{\beta}q^d\\
& \prod_{i=1}^n\left(\frac{(\prod_{a\leq 0}(h_i+az))(\prod_{a\leq 0 }(h_i-D_i+az))}{(\prod_{a\leq d_i}(h_i+az))(\prod_{a\leq d_i-D_i\cdot \beta }(h_i-D_i+az))}\right).
\end{align*}
The $I$-function $I_{X_n}(Q,q,t,z)$ lies in Givental's Lagrangian cone $\mathcal L_{X_n}$ of $X_n$. Then $Y:=X_{n, (X_{1\infty}, r_1), (X_{2\infty}, r_2),...,(X_{n\infty}, r_n)}$, a multi-root stack of $X_n$, is a toric stack bundle over $X$. By the mirror theorem for toric stack bundles \cite{JTY}, the $I$-function is
\begin{align*}
    &I_{Y}(Q,q,t,z)=e^{(\sum_{i=1}^n h_i\log q_i)/z}\sum_{\beta\in \on{NE}(X)}\sum_{d_i\geq 0, 1\leq i \leq n}J_{X,\beta}(t,z)Q^{\beta}q^d\\
&\quad \cdot\prod_{i=1}^n\left(\frac{(\prod_{\langle a \rangle =\langle d_i/r_i\rangle, a\leq 0}(h_i/r_i+az))(\prod_{a\leq 0 }(h_i-D_i+az))}{(\prod_{\langle a \rangle =\langle d_i/r_i \rangle, a\leq d_i/r_i}(h_i/r_i+az))(\prod_{a\leq d_i-D_i\cdot \beta }(h_i-D_i+az))}\right) \textbf{1}_{-\left\langle \frac{d_1}{r_1},\frac{d_2}{r_2},\ldots, \frac{d_n}{r_n}\right\rangle}.
\end{align*}
The $I$-function $I_{Y}(Q,q,t,z)$ lies in Givental's Lagrangian cone $\mathcal L_{Y}$ of $Y$ by \cite{JTY}. 

Since $\mathcal O_{X_i}(1)$ are convex line bundles, we may apply orbifold quantum Lefschetz (\cite{Tseng}, \cite{CCIT09}). Recall that 
\[
\iota^*h_i=D_i,
\]
where $\iota: X_{D,\vec r}\hookrightarrow Y$ is the embedding. Following the construction in \cite{FTY}*{Section 3.2}, 
the non-extended $I$-function for the root stack $X_{D,\vec r}$ is
\begin{align}\label{I-orb}
&I_{X_{D,\vec r}}(Q,t,z)
=\sum_{\beta\in \on{NE}(X)} J_{X, \beta}(t,z)Q^{\beta}
\prod_{i=1}^n\frac{\prod_{0< a\leq d_i}(D_i+az)}{\prod_{\substack{\langle a \rangle =\langle d_i/r_i\rangle \\0< a\leq \frac{d_i}{r_i}}}(D_i/r_i+az)}\textbf{1}_{-\left\langle \frac{d_1}{r_1},\frac{d_2}{r_2},\ldots, \frac{d_n}{r_n}\right\rangle},
\end{align}
where $d_i=D_i\cdot \beta$.

We arrive at
\begin{thm}\label{thm-mirror}
Let $X$ be a smooth projective variety and $D:=D_1+D_2+...+D_n$ be a simple normal-crossing divisor with $D_i\subset X$ smooth, irreducible and nef. The non-extended $I$-function $I_{X_{D,\vec r}}$ (\ref{I-orb}) lies in Givental's Lagrangian cone $\mathcal L_{X_{D,\vec r}}$ of $X_{D,\vec r}$.
\end{thm}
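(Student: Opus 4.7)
The proof plan follows the geometric setup already assembled in Section \ref{sec:construction}: realize $X_{D,\vec r}$ as a complete intersection inside a toric stack bundle over $X$, and then apply in succession the mirror theorem for toric fibrations, the mirror theorem for toric stack bundles, and orbifold quantum Lefschetz. Concretely, the first step is to read off the $I$-function $I_{X_n}$ of the iterated $\mathbb{P}^1$-bundle $X_n\to X$ from \cite{Brown}, using that $X_n$ is a toric fibration over $X$ with fiber $(\mathbb{P}^1)^n$ and fiber hyperplane classes $h_1,\ldots,h_n$; this produces the factor
\[
\prod_{i=1}^n\frac{(\prod_{a\leq 0}(h_i+az))(\prod_{a\leq 0}(h_i-D_i+az))}{(\prod_{a\leq d_i}(h_i+az))(\prod_{a\leq d_i-D_i\cdot\beta}(h_i-D_i+az))}
\]
twisting $J_{X,\beta}$. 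This is automatic from the toric fibration mirror theorem because the twisting bundles on the fiber are pulled back to bundles involving $D_i$ on the base.

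The second step is to pass from $X_n$ to $Y=X_{n,(X_{1\infty},r_1),\ldots,(X_{n\infty},r_n)}$. Since the root construction is performed only along the sections $X_{i\infty}$ at infinity of each $\mathbb{P}^1$-factor, $Y$ is a toric stack bundle over $X$ (with fiber the iterated root stack of $(\mathbb{P}^1)^n$ along the $\infty$-divisors). The mirror theorem for toric stack bundles \cite{JTY} applies and replaces each toric-fiber hypergeometric factor with its orbifold version: the product $\prod_{a\leq d_i}(h_i+az)$ becomes $\prod_{\langle a\rangle=\langle d_i/r_i\rangle,\, a\leq d_i/r_i}(h_i/r_i+az)$, together with the appropriate age-shift factor $\mathbf{1}_{-\langle d_1/r_1,\ldots,d_n/r_n\rangle}$. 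This gives the expression for $I_Y$ recorded in Section \ref{sec:mir_thm}, and it lies on $\mathcal{L}_Y$ by \cite{JTY}.

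The third step is orbifold quantum Lefschetz applied to the embedding $\iota\colon X_{D,\vec r}\hookrightarrow Y$ cut out by the sections $\tilde f_0,\ldots,\tilde f_{n-1}\in H^0(\mathcal{O}_{X_{i+1}}(1))$ produced in the second lemma of Section \ref{sec:construction}. The line bundles $\mathcal{O}_{X_i}(1)$ are convex (their fiber restriction is $\mathcal{O}_{\mathbb{P}^1}(1)$, whose higher cohomology vanishes), so the hypotheses of the twisted $I$-function theorem of \cite{Tseng} (building on \cite{CCIT09}) are satisfied. Pulling back the $I$-function $I_Y$ via $\iota$, using $\iota^*h_i=D_i$, and multiplying by the hypergeometric modification
\[
\prod_{i=1}^n\prod_{0<a\leq d_i}(D_i+az)
\]
coming from the Euler class of the obstruction bundle, the $h_i-D_i$ factors in $I_Y$ cancel and the remaining factors reorganize, after some bookkeeping, into the expression \eqref{I-orb} for $I_{X_{D,\vec r}}$. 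The resulting hypergeometric modification of $I_Y$ lies on $\mathcal{L}_{X_{D,\vec r}}$, as required.

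The main technical obstacle is the bookkeeping in the third step: matching the factors produced by the quantum Lefschetz procedure (applied $n$ times, once for each $\tilde f_i$, with the appropriate orbifold twists along the $X_{i\infty}$) against the clean final expression \eqref{I-orb}, and verifying that cancellation with the $h_i-D_i$ denominators is compatible with the age-shifted twisted sectors $\mathbf{1}_{-\langle d_i/r_i\rangle}$. This bookkeeping is essentially the one carried out for smooth divisors in \cite{FTY}*{Section 3.2}; the new content here is simply that, because $r_1,\ldots,r_n$ are pairwise coprime and the root constructions were performed independently in each $\mathbb{P}^1$-factor, the factors indexed by $i=1,\ldots,n$ decouple and the argument reduces to $n$ parallel copies of the smooth-divisor case.
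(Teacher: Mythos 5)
Your proposal matches the paper's proof essentially step for step: read $I_{X_n}$ from \cite{Brown}, pass to $I_Y$ via the toric stack bundle mirror theorem of \cite{JTY}, and descend to $X_{D,\vec r}$ by orbifold quantum Lefschetz (\cite{Tseng}, \cite{CCIT09}) using $\iota^*h_i = D_i$ and the bookkeeping of \cite{FTY}*{Section 3.2}. One small correction: the pairwise coprimality of the $r_i$ is what makes $X_{D,\vec r}$ a nonsingular Deligne--Mumford stack with well-defined Gromov--Witten theory, not what causes the hypergeometric factors to decouple --- that decoupling is a consequence of the toric stack bundle structure of $Y$ (the fiber is a product of root stacks of $\mathbb{P}^1$), independent of any coprimality.
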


\begin{rmk}
Similar to the case of smooth divisors, the nefness condition on $D_i$ can be removed if $D_i$ is a toric invariant divisor of a toric variety $X$.
\end{rmk}

\begin{ex}[Projective spaces]
Let $X=\mathbb{P}^{n-1}$. The small $J$-function of $X$ is well-known:
\begin{equation*}
J_{\mathbb{P}^{n-1}}=ze^{(t_0+Pt)/z}\sum_{d\geq 0}\frac{Q^d e^{d t}}{\prod_{0<a\leq d} (P+az)^n},    
\end{equation*}
where $P$ is the hyperplane class. 

If we take $D_1,...,D_n$ to be the toric prime divisors of $X$ (whose classes are equal to $P$), then the $I$-function for this $X_{D,\vec r}$ is
\begin{equation*}
ze^{(t_0+Pt)/z}\sum_{d\geq 0}\frac{Q^d e^{d t}}{\prod_{0<a\leq d} (P+az)^n}
\prod_{i=1}^n\frac{\prod_{0< a\leq d}(P+az)}{\prod_{\substack{\langle a \rangle =\langle d/r_i\rangle \\0< a\leq \frac{d}{r_i}}}(P/r_i+az)}\textbf{1}_{-\left\langle \frac{d}{r_1},\frac{d}{r_2},\ldots, \frac{d}{r_n}\right\rangle}.
\end{equation*}
After cancellation, this becomes 
\begin{equation*}
ze^{(t_0+Pt)/z}\sum_{d\geq 0}\frac{Q^d e^{d t}}{\prod_{i=1}^n\prod_{\substack{\langle a \rangle =\langle d/r_i\rangle \\0< a\leq \frac{d}{r_i}}}(P/r_i+az)}\textbf{1}_{-\left\langle \frac{d}{r_1},\frac{d}{r_2},\ldots, \frac{d}{r_n}\right\rangle}.
\end{equation*}
\end{ex}

\begin{ex}\label{ex-I-p-2-orb}
Let $X=\mathbb P^2$ and $D$ be the union of a line and a conic. The $I$-function for $\mathbb P^2_{D,\vec r}$ is
\begin{equation*}
ze^{(t_0+Pt)/z}\sum_{d\geq 0}\frac{Q^d e^{d t}}{\prod_{0<a\leq d} (P+az)^3}
\frac{\prod_{0< a\leq d}(P+az)}{\prod_{\substack{\langle a \rangle =\langle d/r_i\rangle \\0< a\leq \frac{d}{r_i}}}(P/r_i+az)}\frac{\prod_{0< a\leq 2d}(2P+az)}{\prod_{\substack{\langle a \rangle =\langle 2d/r_i\rangle \\0< a\leq \frac{2d}{r_i}}}(2P/r_i+az)}\textbf{1}_{-\left\langle \frac{d}{r_1},\frac{2d}{r_2}\right\rangle}.
\end{equation*}
\end{ex}

Similar to the case of smooth divisors, we can also write down the extended $I$-function for root stacks. The extended $I$-function depends on the choice of the extended data. For toric stack bundles (or toric stacks), extended data corresponds to extended stacky fans \cite{Jiang}. For example, we can choose the extended data to be a subset of the so-called box elements of the toric stack bundles. Box elements correspond to twisted sectors of the inertia stack of the toric stack bundle. Under suitable assumptions, it means that the extended $I$-function can be used to compute orbifold invariants with orbifold markings that map to these twisted sectors. An example of such an extended $I$-function is written in \cite{FTY}*{Section 3.4} for root stacks where the root construction is along a smooth divisor. 

For multi-root stacks, we can also choose the extended data to correspond to twisted sectors whose coarse moduli spaces are $D_i$. More precisely, we choose the extended data to be
\[
S=\{a_{ij}\}_{i\in\{1,\ldots,n\}, j\in \{1,\ldots,m\}}.
\]
This extended data corresponds to the box elements $a_{ij}/r_i$ of the toric stack bundle $Y$ (constructed in Section \ref{sec:construction}). 
Then the $S$-extended $I$-function is
\begin{align}\label{Extended-I-function}
&I_{X_{D,\vec r}}^{S}(Q,x,t,z)
=\sum_{\beta\in \on{NE}(X)}\sum_{\substack{(k_{i1},\ldots,k_{im})\in (\mathbb Z_{\geq 0})^m \\ 1\leq i \leq n}}J_{X, \beta}(t,z)Q^{\beta}\frac{\prod_{i=1}^n\prod_{j=1}^m x_{ij}^{k_{ij}}}{z^{\sum_{i=1}^n\sum_{j=1}^m k_{ij}}\prod_{i=1}^n\prod_{j=1}^m(k_{ij}!)}\times\\
\notag& \prod_{i=1}^n\left(
  \frac{\prod_{0<a\leq d_i}(D_i+az)\prod_{\langle a\rangle=\langle \frac{d_i-\sum_{j=1}^mk_{ij}a_{ij}}{r_i}\rangle,a\leq 0}(D_i/r_i+az)}{\prod_{\langle a\rangle=\langle \frac{d_i-\sum_{j=1}^mk_{ij}a_{ij}}{r_i}\rangle,a\leq \frac{d_i-\sum_{j=1}^mk_{ij}a_{ij}}{r_i}}(D_i/r_i+az)}\right)\mathbf 1_{\left\langle \frac{-d_1+\sum_{j=1}^mk_{1j}a_{1j}}{r_1},\ldots, \frac{-d_n+\sum_{j=1}^mk_{nj}a_{nj}}{r_n}\right\rangle},
\end{align}
where $x=\{x_{11},\ldots,x_{nm}\}$ is the set of variables corresponding to the extended data $S$. The variable $x_{ij}$ corresponds to the twisted sector whose coarse moduli space is $D_i$ and the age is $a_{ij}/r_i$.

\begin{thm}\label{thm:mirror-extended}
Let $X$ be a smooth projective variety and $D:=D_1+D_2+...+D_n$ be a simple normal-crossing divisor with $D_i\subset X$ smooth, irreducible and nef. The $S$-extended $I$-function $I_{X_{D,\vec r}}^{S}(Q,x,t,z)$ lies in Givental's Lagrangian cone $\mathcal L_{X_{D,\vec r}}$ of $X_{D,\vec r}$.
\end{thm}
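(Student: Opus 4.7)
The plan is to parallel the proof of Theorem \ref{thm-mirror}, upgrading each intermediate $I$-function to its extended counterpart. The geometric tower $X_n \to \cdots \to X_1 \to X$ and the multi-root stack $Y = X_{n,(X_{1\infty},r_1),\ldots,(X_{n\infty},r_n)}$ of Section \ref{sec:construction} are unchanged; only the hypergeometric series are enriched.

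First, I would invoke Jiang's extended stacky fan construction \cite{Jiang} for the toric stack bundle $Y$ over $X$, taking as extended data the box elements of $Y$ supported on the root-construction loci $X_{i\infty}$ with ages $a_{ij}/r_i$. The mirror theorem for toric stack bundles of \cite{JTY} then provides an extended $I$-function $I_Y^S(Q,q,x,t,z)$ lying in $\mathcal L_Y$; explicitly, it is obtained from the $I_Y$ appearing in Section \ref{sec:mir_thm} by inserting $\prod_{i,j}x_{ij}^{k_{ij}}/(z^{\sum k_{ij}}\prod k_{ij}!)$ summed over multi-indices $(k_{ij}) \in (\mathbb Z_{\geq 0})^{nm}$, and by replacing $\langle d_i/r_i\rangle$ in each $i$-indexed twisted factor with $\langle (d_i - \sum_j k_{ij}a_{ij})/r_i\rangle$, together with the corresponding shift of the twisted sector insertion $\mathbf 1_{(\cdot)}$.

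Next, I would apply orbifold quantum Lefschetz \cite{Tseng, CCIT09} to descend from $Y$ to the complete intersection $X_{D,\vec r}\hookrightarrow Y$ cut out by the sections $\tilde f_i \in H^0(X_{i+1},\mathcal O_{X_{i+1}}(1))$. Since the bundles $\mathcal O_{X_i}(1)$ are convex, twisting $I_Y^S$ by the hypergeometric factors $\prod_{0<a\leq d_i}(h_i+az)$ and restricting via $\iota^*$ (using $\iota^* h_i = D_i$) yields a slice of $\mathcal L_{X_{D,\vec r}}$. After the same cancellations that produce (\ref{I-orb}) from $I_Y$ in the non-extended case, setting $\tilde d_i := d_i - \sum_j k_{ij}a_{ij}$, the surviving $i$-indexed factor becomes
$$\frac{\prod_{0<a\leq d_i}(D_i+az)\,\prod_{\langle a\rangle=\langle \tilde d_i/r_i\rangle,\,a\leq 0}(D_i/r_i+az)}{\prod_{\langle a\rangle=\langle \tilde d_i/r_i\rangle,\,a\leq \tilde d_i/r_i}(D_i/r_i+az)},$$
which is exactly the $i$-th factor of (\ref{Extended-I-function}).

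The main obstacle is the combinatorial bookkeeping at the interface between the two mirror theorems: one must verify that the box elements of $Y$ chosen as extended data restrict under $\iota$ to the twisted sectors of $X_{D,\vec r}$ whose coarse spaces are the $D_i$ with ages $a_{ij}/r_i$, and that the age-shifts induced by the $k_{ij}$-summation in $I_Y^S$ are precisely the shifts that appear in the range of the Lefschetz hypergeometric modification factors. This matching is the direct analogue of the smooth-divisor verification carried out in \cite{FTY}*{Section 3.4}, and once checked coordinate by coordinate in $i$, Theorem \ref{thm:mirror-extended} follows immediately from the combination of \cite{JTY} with \cite{Tseng, CCIT09}.
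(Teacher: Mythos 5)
Your proposal is correct and follows exactly the route the paper (implicitly) uses: apply the mirror theorem for toric stack bundles \cite{JTY} with the box elements over the $X_{i\infty}$ as the extended data to get $I_Y^S \in \mathcal L_Y$, then use orbifold quantum Lefschetz \cite{Tseng, CCIT09} for the convex bundles $\mathcal O_{X_{i+1}}(1)$ to descend to $X_{D,\vec r}$, with $\iota^*h_i = D_i$ and the $k_{ij}$-shifts producing the hypergeometric factors in (\ref{Extended-I-function}). The combinatorial matching you flag is precisely the verification already carried out in \cite{FTY}*{Section 3.4} in the smooth-divisor case, and extends factor-by-factor in $i$ as you describe.
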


\begin{rmk}
We can also choose the extended data to correspond to twisted sectors whose coarse moduli spaces are intersections of some $D_i$. There are many choices, we do not plan to write down more general version of the extended $I$-function.
\end{rmk}

\section{Large $\vec r$ limit}\label{sec:limit}

We will formally consider the large $\vec r$ limits (if they exist) of the invariants of root stacks as invariants of infinite root stacks. Following the description in \cite{FWY}*{Section 7.1}, we formally define the state space for the Gromov--Witten theory of $X_{D,\infty}$ as the limit of the state space of $X_{D,\vec r}$:
\[
\mathfrak H:=\bigoplus_{(s_1,\ldots,s_n)\in \mathbb Z^n}\mathfrak H_{(s_1,\ldots,s_n)},
\]
where 
\[
\mathfrak H_{(0,\ldots, 0)}:=H^*(X)
\]
\[
\mathfrak H_{(s_1,\ldots,s_n)}:=H^*(\cap_{i: s_i\neq 0} D_i).
\]
We write $[\gamma]_{(s_1,\ldots,s_n)}$ for an element in $\mathfrak H_{(s_1,\ldots,s_n)}$. Note that if $\cap_{i: s_i\neq 0} D_i=\emptyset$, then $\mathfrak H_{(s_1,\ldots,s_n)}=0$.

Similar to \cite{FTY}*{Section 4}, we fix a class $\beta \in \on{NE}(X)$ and let $r_i>d_i>0$. We see that the coefficient of the $I$-function is
\[
 J_{X, \beta}(t,z)Q^{\beta}
\prod_{i=1}^n\frac{\prod_{0< a\leq d_i}(D_i+az)}{(D_i+d_iz)/r_i}\textbf{1}_{-\left\langle \frac{d_1}{r_1},\frac{d_2}{r_2},\ldots, \frac{d_n}{r_n}\right\rangle}.
\]
The only factor that depends on $r_i$ is 
\[
\left(\prod_{i=1}^n r_i\right)\textbf{1}_{-\left\langle \frac{d_1}{r_1},\frac{d_2}{r_2},\ldots, \frac{d_n}{r_n}\right\rangle}.
\]
Note that when $d_i=0$, for some $i$, the corresponding hypergeometric factor is simply
\[
\frac{\prod_{0< a\leq d_i}(D_i+az)}{\prod_{\substack{\langle a \rangle =\langle d_i/r_i\rangle \\0< a\leq \frac{d_i}{r_i}}}(-D_i/r_i-az)}=1.
\]

We define the index set
\[
I_\beta=\left\{i\in\{1,\ldots,n\}| D_i\cdot \beta \neq 0\right\},
\]
then the marking with insertion $\textbf{1}_{\left\langle \frac{d_1}{r_1},\frac{d_2}{r_2},\ldots, \frac{d_n}{r_n}\right\rangle}$ maps to the intersection $\cap_{i\in I_\beta} D_i$. When the intersection is empty and $\textbf{1}_{\left\langle \frac{d_1}{r_1},\frac{d_2}{r_2},\ldots, \frac{d_n}{r_n}\right\rangle}=0$. For the corresponding coefficient of the $J$-function, such marking does not exist and invariants are simply zero.

Taking the limit of $r_i\rightarrow \infty$ for each $i$ to the non-extended $I$-function (\ref{I-orb}), then the non-extended $I$-function becomes
\begin{align}\label{I-snc}
I_{X_{D,\infty}}(Q,t,z):=\sum_{\beta\in \on{NE}(X)} J_{X, \beta}(t,z)Q^{\beta}
\prod_{i=1}^n\prod_{0< a< d_i}(D_i+az)[\textbf{1}]_{ (-d_1,-d_2,\ldots, -d_n)},
\end{align}
where, similar to \cite{FTY}*{Section 4.2}, we identify $$\left(\prod_{i: d_i>0} r_i\right)\textbf{1}_{-\left\langle \frac{d_1}{r_1},\frac{d_2}{r_2},\ldots, \frac{d_n}{r_n}\right\rangle}$$ with $$[\textbf{1}]_{ (-d_1,-d_2,\ldots, -d_n)}.$$
Recall that when $d_i=0$, the convention is that 
\[
\prod_{0< a< d_i}(D_i+az)=1.
\]
The existence of such a limit of the $I$-function implies that some genus zero invariants of root stacks stabilize as $r_i$ becomes sufficiently large. 

Similar to \cite{FTY}*{Section 4.3}, we can also write down a limit of the extended $I$-function (\ref{Extended-I-function}). The extended $I$-function for $X_{D,\infty}$ is denoted by $I^S_{X_{D,\infty}}(Q,x,t,z)$ or simply $I_{X_{D,\infty}}(Q,x,t,z)$ as the extended data is indicated by the variables $x_i$. For our propose, we only write down the part of the extended $I$-function of $X_{D,\infty}$ that takes value in $\mathfrak H_{(0,\ldots, 0)}:=H^*(X)$. This part of the $I$-function is denoted by $I_{X_{D,\infty},0}(Q,x,t,z)$. For simplicity, we set $a_{ij}=j$, for $1\leq i \leq n$ and $1\leq j\leq m$. Then,
\begin{align*}
I_{X_{D,\infty},0}(Q,x,t,z):=\sum_{\substack{\beta\in \on{NE}(X),(k_{i1},\ldots,k_{im})\in (\mathbb Z_{\geq 0})^m\\ \sum_{j=1}^m jk_{ij}= d_i, 1\leq i \leq n}  } & J_{X, \beta}(t,z)Q^{\beta}\frac{\prod_{i=1}^n\prod_{j=1}^m x_{ij}^{k_{ij}}}{z^{\sum_{i=1}^n\sum_{j=1}^m k_{ij}}\prod_{i=1}^n\prod_{j=1}^m(k_{ij}!)}\\
& \cdot\left(\prod_{i=1}^n\prod_{0<a\leq d_i}(D_i+az)\right).
\end{align*}

Using Birkhoff factorization procedure of \cite{CG}*{Corollary 5}, we know that invariants in the $J$-function of the root stack $X_{D,\vec r}$ stabilize as $\vec r\rightarrow \infty$ (by $\vec r\rightarrow \infty$, we mean $r_i\rightarrow \infty$ for each $i\in\{1,\ldots, n\}$). When invariants of $X_{D,\vec r}$ stabilize, we formally consider these invariants as invariants of $X_{D,\infty}$. The limits will be denoted by
\[
\langle \cdots\rangle^{X_{D,\infty}}.
\]
Since we consider invariants of $X_{D,\infty}$ as virtual counts of curves with tangency conditions along $D$, we will use the term ``orbifold marking'' and ``relative marking'' interchangeably. We will study the large $\vec r$ limit of invariants of root stacks (i.e., Conjecture \ref{conj-large-r}) in detail in a forthcoming paper.

Our $I$-function can be used to compute some genus zero invariants of $X_{D,\infty}$. We explain how the computation can be done through some examples where the numbers that we compute agree with the numbers of curves with tangency conditions along simple normal crossing divisors.
\begin{ex}\label{ex-p-2-orb-inv}
Let $X=\mathbb P^2$ and $D$ be the union of a line and a conic, which we considered in Example \ref{ex-I-p-2-orb}. We have 
\begin{align}
&I_{\mathbb P^2_{D,\infty},0}(Q,x,t_0,t,z):=\\
\notag&ze^{(t_0+Pt)/z}\sum_{\substack{d\geq 0,(k_{i1},\ldots,k_{im})\in (\mathbb Z_{\geq 0})^m\\ \sum_{j=1}^m jk_{ij}= d_i, 1\leq i \leq 2}  }  Q^d e^{d t}\frac{\prod_{i=1}^2\prod_{j=1}^m x_{ij}^{k_{ij}}}{z^{\sum_{i=1}^2\sum_{j=1}^m k_{ij}}\prod_{i=1}^2\prod_{j=1}^m(k_{ij}!)}\frac{\prod_{a=1}^{2d}(2P+az)}{\prod_{a=1}^d (P+az)^2}.
\end{align}
We can formally take $m$ to infinity (or for each $d$, we choose $m\geq 2d$ and compute invariants of degree $d$) and set $z=1$. To compute genus zero orbifold invariants with one orbifold marking for each irreducible component of $D$ and one interior marking with point constraint, we need to extract the following coefficient of $H^0(X)$ in $I_{\mathbb P^2_{D,\infty},0}$:
\[
\sum_{d\geq 0}Q^dx_{1d}x_{2(2d)}\frac{(2d)!}{(d!)^2}.
\]
Recall that the mirror map is the coefficient of the $z^0$-coefficient of the $I$-function. By direct computation, the mirror map is trivial. Note that the variable $x_{ij}$ corresponds to relative marking that has contact order $j$ to $D_i$ and the non-negative integer $k_{ij}$ records how many times such relative marking appears. In here, $k_{1(d)}=k_{2(2d)}=1$ and $k_{ij}=0$ for all other $i,j$. Hence, we have
\[
\left\langle [\mathbf 1]_{(d,0)},[\mathbf 1]_{(0,2d)},[pt]\right\rangle_{0,(d),(2d),1,d}^{\mathbb P^2_{D,\infty}}=\frac{(2d)!}{(d!)^2}.
\]
This agrees with the expectation that the number of degree $d$ curves in $\mathbb{P}^2$ through one generic point, meeting with a line and a conic with maximal contact orders is $\frac{(2d)!}{(d!)^2}$. The corresponding log Gromov--Witten invariants are computed by Bousseau--Brini--van Garrel in a forthcoming paper \cite{BBvG20} using the scattering diagram. 
\end{ex}

\begin{ex}\label{ex:p1-p1}
Let $X=\mathbb P^1\times \mathbb P^1$ and $D=L_1+L_2$, where $L_1$ and $L_2$ are distinct $(1,1)$ curves. Following the same process in Example \ref{ex-p-2-orb-inv}, we can compute genus zero orbifold invariants of $\mathbb P^1\times \mathbb P^1$ with one orbifold marking for each $L_1$ and $L_2$ and one interior marking with point constraint:
\[
\left\langle [\mathbf 1]_{(d_1,d_2)},[\mathbf 1]_{(d_1,d_2)},[pt]\right\rangle_{0,\{(d_1),(d_2)\},\{(d_1),(d_2)\},1,(d_1,d_2)}^{(\mathbb P^1\times \mathbb P^1)_{D,\infty}}=\frac{(d_1+d_2)!^2}{(d_1!)^2(d_2!)^2}.
\]
They coincide with the computation of log Gromov--Witten invariants in \cite{NR}.
\end{ex}

\section{The local-log-orbifold principle calculations}\label{sec:local-log-orb}

\subsection{Smooth divisors}\label{sec:sm_div}
We consider the case when the divisor $D$ is smooth.
\subsubsection{Computation using the non-extended $I$-function}

For smooth divisors, the non-extended $I$-function for the pair $(X,D)$ is given in \cite{FTY}*{Theorem 1.4}:

\begin{align*}
I_{(X,D)}(Q,t,z):=\sum_{\beta\in \on{NE}(X) }J_{X, \beta}(t,z)Q^{\beta}
\left(\prod_{0<a\leq d-1}(D+az)\right)[\mathbf 1]_{-d}.
\end{align*}
The local $I$-function is
\begin{equation}
I_{\mathcal O_X(-D)}(Q,t,z)=\sum_{\beta\in \on{NE}(X)} J_{X, \beta}(t,z)Q^{\beta}
\prod_{0\leq a< d}(-D+\lambda-az),
\end{equation}
where $\lambda$ is the equivariant parameter.

\begin{proof}[Proof of Theorem \ref{thm-local-rel}]

Note that the coefficient of $[\delta]_{-d}$ for $I_{(X,D)}$ is exactly the same as the coefficient of $D\cup \delta$ for $I_{\mathcal O_X(-D)}$. The way to identify local and relative $I$-functions is to identify $[\delta]_{-d}$ with $D\cup \delta\in H^*(X)$. 
%This is consistent with \cite{FWY}*{Remark 7.2}. 
More precisely, let $\iota: D \hookrightarrow X$ be the inclusion map, we have
\[
\iota_{!}I_{(X,D),\beta}(Q,t,z)=(-1)^{d-1}\left[I_{\mathcal O_X(-D),\beta}(Q,t,z)\right]_{\lambda=0}.
\]

When $t=0$ and the mirror maps are trivial, we directly have the following relation between relative and local invariants:
\[
\left\langle [\iota^*\gamma]_{d}\bar{\psi}^a\right\rangle_{0,0,(d),\beta}^{(X,D)}=(-1)^{d-1}\left\langle D\cdot \gamma \bar{\psi}^a \right\rangle_{0,1,\beta}^{\mathcal O_{X}(-D)},  
\]
where $\gamma \in H^*(X)$; the left-hand side is the genus zero relative Gromov--Witten invariant of $(X,D)$ with one marking which has to be a relative marking with maximal contact order; the right-hand side is the genus zero local invariant of $\mathcal O_{X}(-D)$.
This is slightly more general than the result in \cite{vGR}. It can also be proved following \cite{vGR} with minor adjustment. 

When mirror maps are not trivial, we can compute the inverse mirror maps in both cases. The inverse mirror maps can be identified under the identification: $\iota_{!}[\delta]_{-i}=D\cup \delta$, for $i\in \mathbb Z_{>0}$. Therefore, relative and local invariants coincide. More generally, the Birkhoff factorization procedure of \cite{CG}*{Corollary 5} can be used to compute the invariants in the $J$-functions. Therefore, we have the following equality:
\[
\left\langle \prod_{i=1}^l [\gamma_i]_0, [\iota^*\gamma]_{d}\bar{\psi}^a\right\rangle_{0,l,(d),\beta}^{(X,D)}=(-1)^{d-1}\left\langle \prod_{i=1}^l\gamma_i, D\cdot \gamma \bar{\psi}^a \right\rangle_{0,l+1,\beta}^{\mathcal O_{X}(-D)},  
\]
where $\gamma_i\in H^*(X)$ and $a\in \mathbb Z_{\geq 0}$ such that the virtual dimension constraint holds.
\end{proof}

Note that when $\gamma=\textbf{1}\in H^0(X)$ and $a=0$, this becomes the original version of the log-local principal in \cite{vGR} by the divisor equation.

\begin{rmk}
Note that the descendant classes that we use in relative Gromov--Witten theory are descendant classes pulled back from the moduli space $\bM_{g,l+1}(X,\beta)$ of stable maps to $X$.
\end{rmk}

\subsubsection{Computation using the extended $I$-function}\label{sec:extend-rel-I}
The extended $I$-function for relative invariants is given in \cite{FTY}*{Section 4.3}. The $I$-function is taken as a limit of the $I$-function for root stacks. Here, we consider the component of the relative $I$-function that takes values in $H^*(X)$:

\begin{equation}\label{eqn:I0}
I_{(X,D),0}(Q,x,t,z):=\sum_{\substack{\beta\in \on{NE}(X),(k_1,\ldots,k_m)\in (\mathbb Z_{\geq 0})^m\\ \sum_{i=1}^m ik_i= d} }J_{X, \beta}(t,z)Q^{\beta}\frac{\prod_{i=1}^m x_i^{k_i}}{z^{\sum_{i=1}^m k_i}\prod_{i=1}^m(k_i!)}
\left(\prod_{0<a\leq d}(D+az)\right).
\end{equation}

Furthermore, for each $\beta$, we can take $m\geq d=D\cdot \beta$. For relative $I$-function, we only consider the summand where $k_{d}=1$ and $k_i=0$ for $i\neq d$. In other words, we take the degree one part of the polynomial in $x_1,\ldots, x_m$, which is simply $x_d$ because we need $\sum_{i=1}^m ik_i= d$. This corresponds to relative invariants with one relative marking of insertion $[\textbf{1}]_{d}$. The corresponding coefficient of the relative $I$-function is
\begin{align}\label{rel-I-ext-coeff}
J_{X, \beta}(t,z) \frac{x_{d}}{z}Q^{\beta}\left(\prod_{0<a\leq d}(D+az)\right).
\end{align}
When mirror maps are trivial, (\ref{rel-I-ext-coeff}) equals the part of the relative $J$-function where invariants are with one relative marking (with maximal tangency) of insertion $[\textbf{1}]_{d}$ and the distinguished marking is an interior marking.

The local $I$-function is
\begin{equation}
I_{\mathcal O_X(-D)}(Q,t,z)=\sum_{\beta\in \on{NE}(X)} J_{X, \beta}(t,z)Q^{\beta}
\prod_{0\leq a< d}(-D+\lambda-az).
\end{equation}
We will see that the local $I$-function and the extended relative $I$-function are related by derivatives. We consider the following operator called the $S$-operator
\begin{equation}\label{S-operator}
S_{\mathcal O_X(-D)}(t,z)(\gamma):= \gamma+\sum_{l, \beta}\sum_\alpha \frac{Q^\beta}{l!}\left\langle t,...,t,\gamma, \frac{\phi_\alpha}{z-\bar{\psi}}\right\rangle^{\mathcal O_X(-D)}_{0,l+2,\beta}\phi^\alpha,
\end{equation}
where $\gamma\in H^*(X;\mathbb Q)$.
It can be written as a derivative of the $J$-function. We are interested in the case when $\gamma=D=\sum_{j}c_j\phi_j\in H^2(X)$, for some constant $c_j$, where $\{\phi_j\}_j$ is a basis of $H^2(X)$. Then
\[
S_{\mathcal O_X(-D)}(t,z)(D)=\sum_{j}c_j zt_{j}\partial_{t_{j}}J_{\mathcal O(-D)}(t,z).
\]
Now consider the corresponding derivative of the local $I$-function
\begin{align*}
&\sum_{j}c_j zt_{j}\partial_{t_{j}}I_{\mathcal O(-D)}(Q,t,z)\\
=&\sum_{j}c_j zt_{j}\partial_{t_{j}}\sum_{\beta\in \on{NE}(X)} J_{X, \beta}(t,z)Q^{\beta}\prod_{0\leq a< d}(-D+\lambda-az)\\
=&\sum_{\beta\in \on{NE}(X)} J_{X, \beta}(t,z)Q^{\beta}(D+dz)\prod_{0\leq a< d}(-D+\lambda-az),
\end{align*}
where the third line follows from the definition of the $J$-function of $X$ and the divisor equation.

Therefore, we can identify the $I$-functions:
\begin{align}\label{equ:local-rel-I-extend}
zI_{(X,D),0,\beta,x_d}(Q,x,t,z)=(-1)^{d-1}\left[\frac{1}{D+\lambda}\sum_{j}c_j zt_{j}\partial_{t_{j}}I_{\mathcal O(-D),\beta}(Q,t,z)\right]_{\lambda=0},
\end{align}
where $I_{(X,D),0,\beta,x_d}(Q,x,t,z)$ is the $x_d$-coefficient of $I_{(X,D),0,\beta}(Q,x,t,z)$. Let $t=0$ and mirror maps be trivial, Equation (\ref{equ:local-rel-I-extend}) directly imply the relation between local invariants in $S_{\mathcal O_X(-D)}(0,z)(D)$ and the corresponding relative invariants in the relative $I$-function. More specifically, we have
\[
\left\langle [\mathbf 1]_d, [\gamma]_{0}\bar{\psi}^a\right\rangle_{0,1,(d),\beta}^{(X,D)}=(-1)^{d-1}\left\langle D,\gamma\bar{\psi}^a \right\rangle_{0,2,\beta}^{\mathcal O_{X}(-D)},  
\]
where the right-hand side can be simplified using the divisor equation.

Similar to the case with the non-extended $I$-function, following the Birkhoff factorization procedure of \cite{CG}*{Corollary 5}, we can allow more general targets and more ordinary markings. Therefore, we can recover the equality between local and log Gromov--Witten invariants in \cite{vGR}.

\subsection{Normal crossing divisors}\label{sec:snc_div}

\subsubsection{Computation using the non-extended $I$-function}

\begin{proof}[Proof of Theorem \ref{thm-local-orb}]
Recall that the local $I$-function of $\oplus_{i=1}^n\sO_X(-D_i)$ is
\[
I_{\oplus_{i=1}^n\sO_X(-D_i)}(Q,t,z):=\sum_{\beta\in \on{NE}(X)} J_{X, \beta}(t,z)Q^{\beta}
\prod_{i=1}^n\prod_{0\leq a< d_i}(-D_i+\lambda_i-az),
\]
where $\lambda_i$ are equivariant parameters.
Let $\beta$ be a curve class of $X$ with $d_i:=D_i\cdot \beta>0$ for $i\in \{1,\ldots, n\}$. The non-extended $I$-function (\ref{I-snc}) of $X_{D,\infty}$ can be identified with the local $I$-function similar to the smooth divisor case:
\[
\iota_{!}I_{X_{D,\infty},\beta}(Q,t,z)=\left(\prod_{i=1}^n(-1)^{d_i-1}\right)\left[I_{\oplus_{i=1}^n\sO_X(-D_i),\beta}(Q,t,z)\right]_{\lambda_i=0}.
\]
We assume that the intersection of the divisors $\cap_{i=1}^n D_i$ is not empty. The corresponding $J$-function has only one relative marking. The only relative marking has to map to the intersection $\cap_{i=1}^n D_i$ and has maximal tangency to all $D_i$.

Let $t=0$ and mirror maps be trivial, we directly have the following relation between orbifold and local invariants:
\[
\left\langle [\iota^*\gamma]_{\vec d}\bar{\psi}^a\right\rangle_{0,0,(\vec d),\beta}^{X_{D,\infty}}=\left(\prod_{i=1}^n(-1)^{d_i-1}\right)\left\langle (\cup_{i=1}^n D_i)\cdot \gamma \bar{\psi}^a \right\rangle_{0,1,\beta}^{\oplus_{i=1}^n\sO_X(-D_i)},  
\]
where $\gamma\in H^*(X)$.

Givental's formalism for the formal Gromov--Witten theory of infinite root stacks has been built recently in \cite{TY20c}. We can also simple proceed as follows using Givental's formalism for Gromov--Witten theory of $X_{D,\vec r}$. Given a curve class $\beta\in \on{NE}(X)$, we consider the Gromov--Witten theory of $X_{D,\vec r}$ for a sufficiently large $\vec r$. Applying the Birkhoff factorization procedure of \cite{CG}*{Corollary 5} to the Gromov--Witten theory of $X_{D,\vec r}$ and the Gromov--Witten theory of $\oplus_{i=1}^n\sO_X(-D_i)$, we have the following equality:
\[
\left\langle \prod_{i=1}^l[\gamma_i]_0, [\iota^*\gamma]_{\vec d}\bar{\psi}^a\right\rangle_{0,l,(\vec d),\beta}^{X_{D,\infty}}=\left(\prod_{i=1}^n(-1)^{d_i-1}\right)\left\langle \prod_{i=1}^l \gamma_i, (\cup_{i=1}^n D_i)\cdot \gamma \bar{\psi}^a \right\rangle_{0,l+1,\beta}^{\oplus_{i=1}^n\sO_X(-D_i)},  
\]
where $\gamma, \gamma_i\in H^*(X)$ for $i=1,\ldots, l$.
\end{proof}

\subsubsection{Computation using the extended $I$-function}

Recall that, the part of the extended $I$-function that takes value in $H^*(X)$ is
\begin{align*}
I_{X_{D,\infty},0}(Q,x,t,z):=\sum_{\substack{\beta\in \on{NE}(X),(k_{i1},\ldots,k_{im})\in (\mathbb Z_{\geq 0})^m\\ \sum_{j=1}^m jk_{ij}= d_i, 1\leq i \leq n}  } & J_{X, \beta}(t,z)Q^{\beta}\frac{\prod_{i=1}^n\prod_{j=1}^m x_{ij}^{k_{ij}}}{z^{\sum_{i=1}^n\sum_{j=1}^m k_{ij}}\prod_{i=1}^n\prod_{j=1}^m(k_{ij}!)}\\
& \cdot\left(\prod_{i=1}^n\prod_{0<a\leq d_i}(D_i+az)\right).
\end{align*}

\begin{proof}[Proof of Theorem \ref{thm-local-orb-ext}]

Let $\beta$ be a curve class of $X$ with $d_i:=D_i\cdot \beta>0$ for $i\in \{1,\ldots, n\}$. We can compute genus zero invariants with one orbifold/relative marking for each $D_i$ and one interior marking. Similar to the smooth case, the corresponding part of the $I$-function $I_{X_{D,\infty},0}$ that takes value in $H^*(X)$ is
\[
J_{X, \beta}(t,z)\left(\prod_{i=1}^n\frac{x_{id_i}}{z}\right)Q^{\beta}\left(\prod_{i=1}^n\prod_{0<a\leq d_i}(D_i+az)\right).
\] 

To compute the corresponding local invariants, we consider the $I$-function for the local Gromov-Witten theory of $\bigoplus_{i=1}^n \mathcal O_X(-D_i)$:
\[
I_{\oplus_{i=1}^n\sO_X(-D_i)}(Q,t,z):=\sum_{\beta\in \on{NE}(X)} J_{X, \beta}(t,z)Q^{\beta}
\prod_{i=1}^n\prod_{0\leq a< d_i}(-D_i+\lambda_i-az).
\]
Similar to the case of smooth divisors, $I_{\oplus_{i=1}^n\sO_X(-D_i)}(Q,t,z)$ and $I_{X_{D,\infty},0}$ are related by derivatives. Let $\{\phi_j\}_j$ is a basis of $H^2(X)$. We have $D_i=\sum_{j}c_{ij}\phi_j\in H^2(X)$, for some constant $c_{ij}$. Now consider the corresponding derivative of the local $I$-function
\begin{align*}
&\prod_{i=1}^n\left(\sum_{j}c_{ij} t_{j}\partial_{t_{j}}\right)I_{\oplus_{i=1}^n\sO_X(-D_i)}(Q,t,z)\\
=&\prod_{i=1}^n\left(\sum_{j}c_{ij} t_{j}\partial_{t_{j}}\right)\sum_{\beta\in \on{NE}(X)} J_{X, \beta}(t,z)Q^{\beta}\prod_{i=1}^n\prod_{0\leq a< d_i}(-D_i+\lambda_i-az)\\
=&\sum_{\beta\in \on{NE}(X)} J_{X, \beta}(t,z)Q^{\beta}\prod_{i=1}^n\frac{D_i+d_iz}{z}\prod_{i=1}^n\prod_{0\leq a< d_i}(-D_i+\lambda_i-az).
\end{align*}

Therefore, we can identify the $I$-functions:
\begin{align}
&I_{X_{D,\infty},0,\beta,\prod_{i=1}^n x_{id_i}}(Q,x,t,z)\\
\notag=&\left(\prod_{i=1}^n(-1)^{d_i-1}\right)\left[\frac{1}{\prod_{i=1}^n (D_i+\lambda_i)}\prod_{i=1}^n\left(\sum_{j}c_{ij} t_{j}\partial_{t_{j}}\right)I_{\oplus_{i=1}^n\sO_X(-D_i),\beta}(Q,t,z)\right]_{\lambda_i=0},
\end{align}
where $I_{X_{D,\infty},0,\beta,\prod_{i=1}^n x_{id_i}}(Q,x,t,z)$ is the $\prod_{i=1}^n x_{id_i}$-coefficient of $I_{X_{D,\infty},0,\beta}(Q,x,t,z)$; $\lambda_i$ are equivariant parameters.

When mirror maps are trivial, we directly obtain the relation between orbifold invariants and local invariants: 
\[
\left\langle \prod_{i=1}^n[\mathbf 1]_{(0,\ldots,0,d_i,0,\ldots,0)}, [\gamma]_{0}\bar{\psi}^a\right\rangle_{0,1,(d_1),\ldots, (d_n),\beta}^{X_{D,\infty}}=\left(\prod_{i=1}^n (-1)^{d_i-1}\right)\left\langle \prod_{i=1}^n D_i, \gamma\bar{\psi}^a \right\rangle_{0,n+1,\beta}^{\bigoplus_{i=1}^n \mathcal O_X(-D_i)}.  
\]
In general, we can apply the Birkhoff factorization procedure of \cite{CG}*{Corollary 5} to the Gromov--Witten theory of $X_{D,\vec r}$ and the Gromov--Witten theory of $\oplus_{i=1}^n\sO_X(-D_i)$ to obtain the following equality:
\begin{align*}
    &\left\langle\prod_{i=1}^l [\gamma_i]_0, \prod_{i=1}^n [\textbf 1]_{(0,\ldots,0,d_i,0,\ldots,0)}, [\gamma]_{0}\bar{\psi}^a\right\rangle_{0,l+1,(d_1),\ldots, (d_n),\beta}^{X_{D,\infty}}\\
    =&\left(\prod_{i=1}^n (-1)^{d_i-1}\right)\left\langle \prod_{i=1}^l \gamma_i, \prod_{i=1}^nD_i,\gamma\bar{\psi}^a \right\rangle_{0,n+l+1,\beta}^{\bigoplus_{i=1}^n \mathcal O_X(-D_i)},  
\end{align*}
where $\gamma,\gamma_i\in H^*(X)$ for $i=1,\ldots, l$.
\end{proof}

\begin{ex}
Let $X$ be $\mathbb P^2$ and $D$ be a union of a line and a conic. In Example \ref{ex-p-2-orb-inv}, we computed that 
\[
\left\langle [\mathbf 1]_{(d,0)},[\mathbf 1]_{(0,2d)},[pt]\right\rangle_{0,(d),(2d),1,d}^{\mathbb P^2_{D,\infty}}=\frac{(2d)!}{(d!)^2}.
\]
The corresponding local invariants of $\mathcal O_{\mathbb P^2}(-1)\bigoplus \mathcal O_{\mathbb P^2}(-2)$ are computed in \cite{KP}*{Proposition 2}. They can be computed using the local $I$-function of $\mathcal O_{\mathbb P^2}(-1)\bigoplus \mathcal O_{\mathbb P^2}(-2)$ as well:
\[
\left\langle [pt]_{\mathbb P^2}\right\rangle_{0,1,d}^{\mathcal O_{\mathbb P^2}(-1)\bigoplus \mathcal O_{\mathbb P^2}(-2)}=(-1)^{d}\frac{(2d)!}{2d^2(d!)^2}.
\]
Therefore, 
\[
\left\langle [\mathbf 1]_{(d,0)},[\mathbf 1]_{(0,2d)},[pt]\right\rangle_{0,(d),(2d),1,d}^{\mathbb P^2_{D,\infty}}=(-1)^d 2d^2\left\langle [pt]_{\mathbb P^2}\right\rangle_{0,1,d}^{\mathcal O_{\mathbb P^2}(-1)\bigoplus \mathcal O_{\mathbb P^2}(-2)}.
\]
\end{ex}

\begin{ex}
Let $X=\mathbb P^1\times \mathbb P^1$ and $D=L_1+L_2$, where $L_1$ and $L_2$ are distinct $(1,1)$ curves. In Example \ref{ex:p1-p1}, we computed that
\[
\left\langle [\mathbf 1]_{(d_1,d_2)},[\mathbf 1]_{(d_1,d_2)},[pt]\right\rangle_{0,\{(d_1),(d_2)\},\{(d_1),(d_2)\},1,(d_1,d_2)}^{(\mathbb P^1\times \mathbb P^1)_{D,\infty}}=\frac{(d_1+d_2)!^2}{(d_1!)^2(d_2!)^2}.
\]
The corresponding local invariants of $\mathcal O_{\mathbb P^1\times \mathbb P^1}(-1,-1)\bigoplus \mathcal O_{\mathbb P^1\times \mathbb P^1}(-1,-1)$ are computed in \cite{KP}*{Proposition 3}. They can also be computed using the local $I$-function. We have
\[
\left\langle [pt]\right\rangle_{0,1,(d_1,d_2)}^{\mathcal O_{\mathbb P^1\times \mathbb P^1}(-1,-1)\bigoplus \mathcal O_{\mathbb P^1\times \mathbb P^1}(-1,-1)}=\frac{1}{(d_1+d_2)^2}\frac{(d_1+d_2)!^2}{(d_1!)^2(d_2!)^2}.
\]
Therefore, 
\begin{align*}
    &\left\langle [\mathbf 1]_{(d_1,d_2)},[\mathbf 1]_{(d_1,d_2)},[pt]\right\rangle_{0,\{(d_1),(d_2)\},\{(d_1),(d_2)\},1,(d_1,d_2)}^{(\mathbb P^1\times \mathbb P^1)_{D,\infty}}\\
    =&(d_1+d_2)^2\left\langle [pt]\right\rangle_{0,1,(d_1,d_2)}^{\mathcal O_{\mathbb P^1\times \mathbb P^1}(-1,-1)\bigoplus \mathcal O_{\mathbb P^1\times \mathbb P^1}(-1,-1)}.
\end{align*}

\end{ex}
\section{Quantum periods for Fano varieties and classical periods}

Let $X$ be a Fano variety and let $D\in |-K_X|$ be a reduced simple normal crossing divisor. It is expected that the classical period associated to the mirror Landau--Ginzburg potential W of $(X,D)$ is related to the regularized quantum period of the Fano variety $X$ defined in the Fano search program. The classical periods of $W$ can be defined using log Gromov--Witten invariants following the Frobenius structure conjecture of \cite{GHK}. A special case of such result has been proved in \cite{Mandel}. In this section, instead of considering log Gromov--Witten invariants of $(X,D)$, we study the corresponding orbifold Gromov--Witten invariants of $X_{D,\infty}$ and their relation with regularized quantum periods.

\subsection{Quantum periods}

Given a Fano variety, one can define the quantum period. 

\begin{defn}[\cite{CCGGK}*{Definition 4.2}]
The quantum period of a Fano variety $X$ is the power series
\[
G_X(t):=\sum_{m\geq 0}p_mt^m,
\]
where
\[
p_0:=1, \quad p_1:=0,
\]
and
\[
p_m:=\sum_{\beta \in \on{NE}(X), -K_X\cdot \beta=m}\int_{[\bM_{0,1}(X,\beta)]^{\on{vir}}}\psi^{m-2}\on{ev}^*([pt]),
\]
for $m\geq 2$.
\end{defn}

\begin{defn}[\cite{CCGGK}*{Definition 4.8}]
The regularized quantum period is the Fourier--Laplace transform of the quantum period:
\[
\hat{G}_X(t):=\sum_{m\geq 0}(m!)p_mt^m.
\]
\end{defn}

\subsection{The Frobenius structure conjecture}\label{sec:frobenius}

Now we turn to the mirror side. A mirror Landau--Ginzburg potential $W$ of $X$ can be associated with a classical period $\pi_W$, which is defined in terms of the constant terms of powers of $W$. A Landau--Ginzburg potential $W$ is said to be a mirror of $X$ if the regularized quantum period of $X$ coincides with the classical period of $W$:
\[
\hat{G}_X(t)=\pi_W.
\]
Given a Fano variety $X$, let $D=D_1+\cdots+D_n\in |-K_X|$. The Frobenius structure conjecture of \cite{GHK} provides a precise way to construct the superpotential $W$. Note that the conjecture was stated in Section 0.4 of the first arXiv version of \cite{GHK}. We briefly review this conjecture and the definition of classical period of $W$ in this section.

It is well known that the degree $0$ subalgebra $QH_{\log}^0(X)$ of the quantum cohomology ring $QH_{\log}^*(X)$ admits quantum product structure. By \cite{GS}, the degree $0$ subalgebra $QH_{\log}^0(X,D)$ of the relative quantum cohomology ring $QH_{\log}^*(X,D)$ is equipped with the relative quantum product structure. By the Frobenius structure conjecture of \cite{GHK}, $QH_{\log}^0(X,D)$ should be naturally equipped with a canonical basis of ``theta functions''. 

Let $S$ be the dual intersection complex of $D$. That is, $S$ is the simplicial complex with vertices $v_1,\ldots, v_n$ and simplices $\langle v_{i_1},\ldots, v_{i_p}\rangle$ corresponding to non-empty intersections $D_{i_1}\cap \cdots \cap D_{i_p}$. Let $B$ denote the cone over $S$ and $\Sigma$ be the induced simplicial fan in $B$. Let $B(\mathbb Z)$ be the set of integer points of $B$. There is a bijection between points $p\in B(\mathbb Z)$ and prime fundamental classes $\vartheta_p\in QH_{\log}^0(X,D)$. 

Suppose we are given points $q_1,\ldots, q_s\in B_0(\mathbb Z)$, where $B_0=B\setminus \{0\}$. Each $q_i$ can be written as a linear combination of primitive generators $v_{ij}$ of rays in $\Sigma$:
\[
q_i=\sum_{j} m_{ij}v_{ij},
\]
where the ray generated by $v_{ij}$ corresponds to a divisor $D_{ij}$. 

For $s\geq 2$, using the result of \cite{GS13} and \cite{AC14}, one can define the associated log Gromov--Witten invariant
\[
N_\beta(q_1,\ldots, q_s):=\int_{[\bM_{0,s+1}(X/D,\beta)]^{\on{vir}}}\on{ev}_0^*[pt]\cdot \psi_0^{s-2},
\]
where $\bM_{0,s+1}(X/D,\beta)$ is the moduli stack of logarithmic stable maps which provides a compactification for the space of stable maps
\[
g:(C,p_0,p_1,\ldots,p_s)\rightarrow X
\]
such that $g_*[C]=\beta$ and $C$ meets $D_{ij}$ at $p_i$ with contact order $m_{ij}$ for each $i,j$ and contact order zero with $D$ at $p_0$.

One can define a $\mathbb Q [\on{NE}(X)]$-multilinear symmetric $s$-point function
\[
\langle \cdot \rangle: QH_{\log}^0(X,D)^s\rightarrow \mathbb Q[\on{NE}(X)]
\]
as the following: for $s\geq 2$ and $q_1,\ldots,q_s\in B_0(\mathbb Z)$,
\[
\left\langle \vartheta_{q_1}, \ldots, \vartheta_{q_s}\right\rangle:=\sum_{\beta\in \on{NE}(X)}N_\beta (q_1,\ldots,q_s)Q^\beta\in  \mathbb Q [\on{NE}(X)].
\]
For $s=1$, we have
\[
\langle \vartheta_0\rangle=1, \text{ and } \langle \vartheta_q\rangle=0, \text{ for } q\in B_0(\mathbb Z). 
\]
Finally,
\[
\left\langle\vartheta_0, \vartheta_{q_1}, \ldots, \vartheta_{q_s}\right\rangle:=\left\langle \vartheta_{q_1}, \ldots, \vartheta_{q_s}\right\rangle, \text{ for } s\geq 1.
\]

The Frobenius structure conjecture of \cite{GHK} can be stated as follows.

\begin{conj}[Frobenius structure conjecture]
There is a unique associative product $*$ on $QH^0(X,D)$ such that
\[
\left\langle \vartheta_{q_1}, \ldots, \vartheta_{q_s}\right\rangle=\left\langle \vartheta_{q_1} *\cdots * \vartheta_{q_s}\right\rangle.
\]
\end{conj}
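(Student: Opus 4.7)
The plan is to prove the Frobenius structure conjecture by first defining a candidate product $*$ via three-point functions and a non-degenerate pairing, then establishing associativity through a WDVV-type boundary argument on moduli spaces of log stable maps. First I would set
\[
\eta(\vartheta_p, \vartheta_q) := \langle \vartheta_0, \vartheta_p, \vartheta_q\rangle
\]
and verify that $\eta$ is a non-degenerate $\mathbb Q[\on{NE}(X)]$-bilinear pairing on $QH^0_{\log}(X,D)$; the leading term in the Novikov expansion should be the classical cup product on the twisted sectors indexed by $p, q \in B(\mathbb Z)$, and invertibility of the leading term gives invertibility of $\eta$. The product is then defined by
\[
\vartheta_{q_1} * \vartheta_{q_2} := \sum_{p, q \in B(\mathbb Z)} \eta^{p,q} \, \langle \vartheta_{q_1}, \vartheta_{q_2}, \vartheta_q \rangle \, \vartheta_p,
\]
with $\eta^{p,q}$ the inverse matrix. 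Uniqueness is then automatic: the defining property $\langle \vartheta_{q_1} * \vartheta_{q_2}, \vartheta_{q_3}\rangle = \langle \vartheta_{q_1}, \vartheta_{q_2}, \vartheta_{q_3}\rangle$ forces the structure constants to be exactly the ones above.

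Associativity $(\vartheta_a * \vartheta_b) * \vartheta_c = \vartheta_a * (\vartheta_b * \vartheta_c)$ is, after pairing against $\vartheta_d$, equivalent to the statement that the four-point function $\langle \vartheta_a, \vartheta_b, \vartheta_c, \vartheta_d\rangle$ can be expanded in two different ways by splitting through an intermediate theta basis, and that these two expansions agree. This is the WDVV equation adapted to log Gromov--Witten theory: one pulls back the rational equivalence of the two boundary divisors of $\bM_{0,4}$ to the log moduli space $\bM_{0,4}(X/D, \beta)$, then applies the splitting axiom of Abramovich--Chen and Gross--Siebert at each node, summing over the dual contact-order decorations carried by the node.

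The main obstacle is the combinatorial matching between the sum over node decorations in the log splitting axiom and the sum over $p \in B(\mathbb Z)$ appearing in the definition of $*$. At a node, the contact data consists of a pair of opposite tropical vectors in $B(\mathbb Z)$, and one must show that the corresponding contribution is precisely $\eta^{p,q}$ times the paired insertion of $\vartheta_p$ and $\vartheta_q$, i.e.\ that $\sum_p \eta^{p,q}\,\vartheta_p \otimes \vartheta_q$ is the diagonal class in the log splitting formula. A secondary obstacle is convergence: one must show that $\vartheta_{q_1} * \vartheta_{q_2}$ is a finite sum at each Novikov order, which should follow from effectivity of curve classes together with the polyhedral asymptotics of theta functions on $B$. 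In the spirit of the present paper, an alternative route is to replace the log invariants $N_\beta(q_1,\ldots,q_s)$ by the corresponding orbifold invariants of $X_{D,\infty}$, produce explicit hypergeometric formulas for the structure constants via Theorem \ref{thm-mirror-intro}, and then deduce associativity from WDVV for the orbifold Gromov--Witten theory of $X_{D,\vec r}$ followed by a large $\vec r$ limit.
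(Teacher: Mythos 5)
This statement is a \emph{conjecture} in the paper; the authors do not prove it. They cite \cite{GS} for the general case (via punctured Gromov--Witten invariants), \cite{FWY} for smooth pairs, \cite{Mandel19} for cluster log pairs, and \cite{KY} for affine log Calabi--Yau varieties containing a torus. So there is no in-paper proof to compare against, and your attempt should be judged on its own merits.

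Your ansatz and the uniqueness argument are sound: defining $\eta(\vartheta_p,\vartheta_q):=\langle\vartheta_0,\vartheta_p,\vartheta_q\rangle=\langle\vartheta_p,\vartheta_q\rangle$, assuming non-degeneracy, and setting the structure constants by $\langle\vartheta_{q_1}*\vartheta_{q_2},\vartheta_{q_3}\rangle=\langle\vartheta_{q_1},\vartheta_{q_2},\vartheta_{q_3}\rangle$ does pin down $*$ uniquely and is exactly the expected Frobenius-algebra mechanism. The real gap is in the associativity step, and it is not a technicality. You invoke ``the splitting axiom of Abramovich--Chen and Gross--Siebert at each node'' as though the boundary of $\bM_{0,4}(X/D,\beta)$ over a boundary divisor of $\bM_{0,4}$ decomposes, after gluing, as a sum over fiber products of lower-pointed log moduli spaces indexed by a theta basis. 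This is false for logarithmic stable maps: when a log stable map degenerates, one half of the broken curve generically carries a \emph{negative} contact order at the node, and there is no such thing as a log stable map with negative tangency in the Abramovich--Chen/Gross--Siebert framework. Resolving this is precisely what forced the introduction of \emph{punctured} Gromov--Witten invariants, which is the entire technical content of the general proof in \cite{GS}. Without punctured invariants (or an explicit alternative degeneration formula), the four-point function cannot be expanded ``in two ways through an intermediate theta basis,'' so the WDVV argument as you state it does not go through. Your final alternative --- replace $N_\beta(q_1,\ldots,q_s)$ by orbifold invariants of $X_{D,\infty}$, derive the structure constants from Theorem~\ref{thm-mirror-intro}, and deduce associativity from orbifold WDVV in the large-$\vec r$ limit --- is a genuinely promising direction and is much closer in spirit to what this paper actually enables, but you would then be proving a statement about the orbifold replacement of $QH^0_{\log}(X,D)$, and you would still owe (i) a comparison of the orbifold and log $s$-point functions for $s\geq 3$ (the paper only establishes such comparisons in special settings and explicitly declines to conjecture equality in general) and (ii) a proof that the large-$\vec r$ limit commutes with the WDVV relations, which is part of the unresolved Conjecture~\ref{conj-large-r}. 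As written, the proposal has a genuine gap and does not constitute a proof.
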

The conjecture was proved in \cite{GS} for $s\leq 3$ by explicitly defining all structure constants in terms of punctured Gromov--Witten invariants. It was proved for cluster log pairs in \cite{Mandel19} and for affine log Calabi--Yau varieties containing a torus in \cite{KY}.

Now we turn to the classical period of a mirror Landau--Ginzburg superpotential $W$ of a Fano variety $X$. Let $D_1,\ldots,D_n$ be irreducible components of $D$. The superpotential $W$ is defined as
\[
W:=\vartheta_{[D_1]}+\cdots+\vartheta_{[D_n]},
\]
where $\vartheta_{[D_i]}:=\vartheta_{v_i}$ and $v_i$ is the primitive generator of the ray in $\Sigma$ corresponding to $D_i$.
The classical period of $W$ is defined as the sum of the $\vartheta_{0}$-coefficient, denoted by $c_{W,d,0}$, of $W^d$ for $d\in \mathbb Z_{\geq 0}$. In other words, the classical period of $W$ is
\[
\pi_W=\sum_{d=0}^\infty c_{W,d,0}\in \mathbb Q[\on{NE}(X)].
\]
More specifically, we have
\[
W^d:= \sum_{q\in B(\mathbb Z)}c_{W,d,q}\vartheta_q=\sum_{(d_1,\ldots, d_n)\in (\mathbb Z_{\geq 0})^n: \sum_{i=1}^n d_i=d} \frac{d!}{d_1!\cdots d_n!} \vartheta_{[D_1]}^{d_1}*\cdots *\vartheta_{[D_n]}^{d_n}.
\]
Therefore, the $\vartheta_{0}$-coefficient of $W^d$ is
\[
c_{W,d,0}=\sum_{(d_1,\ldots, d_n)\in (\mathbb Z_{\geq 0})^n: \sum_{i=1}^n d_i=d} \frac{d!}{d_1!\cdots d_n!} \sum_{\beta \in \on{NE}(X)}Q^\beta N_\beta(\textbf{q}_{(d_1,\ldots, d_n)}),
\]
where $\textbf{q}_{(d_1,\ldots, d_n)}$ is a $d$-tuple consisting of $d_i$ instances of $[D_i]$ for each $i=1,\ldots,n$.

It is expected that the regularized quantum period coincides with the classical period defined using $W$. This has been proved for some special cases in \cite{Mandel}.

\subsection{Computation using root stack invariants}

Log Gromov--Witten invariants of $(X,D)$ and formal Gromov--Witten invariants of $X_{D,\infty}$ both provide virtual counts of curves with tangency conditions along the divisor $D$. While log Gromov--Witten invariants are usually difficult to compute, the large $\vec r$ limit of Theorem \ref{thm-mirror-intro} provides an effective way to compute Gromov--Witten invariants of $X_{D,\infty}$. Therefore, instead of computing the log Gromov--Witten invariant $N_\beta(\textbf{q}_{(d_1,\ldots, d_n)})$, we will compute the corresponding orbifold Gromov--Witten invariant of the infinite root stack $X_{D,\infty}$ with the same data. In other words, we compute genus zero invariants of $X_{D,\infty}$ of degree $\beta$ with $(d+1)$ markings, where $d=D\cdot \beta$, such that there are $d_i$, where $d_i=D_i\cdot \beta$, relative markings of contact order $1$ with $D_i$ and one interior marking with insertion $[pt]\bar{\psi}^{d-2}$. We will denote such invariants as $N^{\on{orb}}_\beta(\textbf{q}_{(d_1,\ldots, d_n)})$. Note that this is opposite to Section \ref{sec:local-log-orb} where relative markings are with maximal tangency instead of ``minimal'' tangency.

 \begin{assu}\label{assumption}
 Let $X$ be a Fano variety and $D=D_1+\cdots+D_n\in |-K_X|$. We assume that $D_i$ are nef for $1\leq i \leq n$ and satisfy the condition:
\[
\#\{i\in \{1,\ldots, n\}| D_i\cdot \beta>0\}\geq 2
\]
for all $\beta\in \on{NE}(X)$ with $D\cdot \beta\geq 2$. 
\end{assu}

When $t=0$, $X$ is Fano and $D=D_1+\cdots+D_n\in |-K_X|$, Assumption \ref{assumption} is the condition that the mirror map is trivial for the mirror theorem of $X_{D,\infty}$.

\begin{lemma}
Let $X$ be a Fano variety and $D=D_1+\cdots+D_n\in |-K_X|$ satisfies Assumption \ref{assumption}. Then the mirror map is trivial. More precisely, the $z^0$-coefficient of $I_{X_{D,\infty}}(Q,x,0,z)$ is $\sum_{i,j} x_{ij}[\textbf 1]_{(0,\ldots,0,j,0,\ldots,0)}$ and coefficients of positive powers of $z$ are zero (except for the term $z$):
\[
I_{X_{D,\infty}}(Q,x,0,z)=z+\sum_{i,j} x_{ij}[\textbf 1]_{(0,\ldots,0,j,0,\ldots,0)}+o(z^{-1}).
\]
\end{lemma}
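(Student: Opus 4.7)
The plan is to expand $I_{X_{D,\infty}}(Q,x,0,z)$ in the Novikov variable $Q$ and the extended variables $x_{ij}$, then to bound the $z$-degree of each resulting monomial so that the only contributions to nonnegative powers of $z$ are those named in the statement.

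I would begin by isolating the $\beta=0$ summand. With $\beta=0$ one has $J_{X,0}(0,z)=z$ and every $d_i=0$, so the hypergeometric factor is empty. What survives is the prefactor $\prod x_{ij}^{k_{ij}}/(z^{\sum k_{ij}}\prod k_{ij}!)$ attached to the twisted-sector unit indexed by $m_i:=\sum_j j k_{ij}$. The tuple $(k_{ij})=0$ returns the dilaton shift $z$; a single $k_{i_0 j_0}=1$ produces exactly $x_{i_0 j_0}[\mathbf 1]_{0,\ldots,j_0,\ldots,0}$ at $z$-degree $0$; any other choice has $\sum k_{ij}\geq 2$ and drops to $z$-degree $\leq-1$, landing in a different twisted sector. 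This yields the $z$ and $\sum x_{ij}[\mathbf 1]_{0,\ldots,j,\ldots,0}$ parts of the claim.

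Next I would treat $\beta\neq 0$, writing $d:=-K_X\cdot\beta\geq 1$. In any twisted sector with $m_i\geq 0$ for all $i$, the hypergeometric factor reduces to $\prod_i\prod_{0<a\leq d_i}(D_i+az)$, a polynomial in $z$ of total degree $d$ whose $z^e$-coefficient is a class of cohomological degree $2(d-e)$. The dimension identity for $\bM_{0,1}(X,\beta)$ makes each monomial of $J_{X,\beta}(0,z)\phi^\alpha$ carry $z$-exponent $p=\tfrac12\deg\phi_\alpha-\dim X-d+1$, and nonvanishing of the product with the $z^e$-coefficient of the hypergeometric factor forces $\tfrac12\deg\phi_\alpha\geq d-e$; combined with the standard bounds $\tfrac12\deg\phi_\alpha\leq\dim X$ and $e\leq d$, the total $z$-exponent satisfies
\[
p+e-\sum_{i,j}k_{ij}\leq 1-\sum_{i,j}k_{ij}.
\]
In either the $H^*(X)$-sector or a one-index sector $[\mathbf 1]_{0,\ldots,j,\ldots,0}$ the equations $\sum_j j k_{ij}=d_i$ (respectively $d_i+m_i$) force $\sum_{ij}k_{ij}\geq|I_\beta|$; Assumption \ref{assumption} gives $|I_\beta|\geq 2$ as soon as $d\geq 2$, so the $z$-exponent is $\leq-1$. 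The edge case $d=1$ I would handle directly: the descendant constraint $k=\dim X-1-\tfrac12\deg\phi_\alpha\geq 0$ sharpens the bound to $\tfrac12\deg\phi_\alpha\leq\dim X-1$, whence $p+e\leq 0$, and $\sum k_{ij}\geq 1$ for every nontrivial monomial in the relevant sectors.

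The main obstacle will be the careful $r_i\to\infty$ bookkeeping for twisted sectors with some $m_i<0$: there the hypergeometric ratio produces a compensating factor $r_i$ which is absorbed into the renormalization of $[\mathbf 1]_{m_1,\ldots,m_n}$, so these summands land in sectors different from $H^*(X)$ and from $[\mathbf 1]_{0,\ldots,j,\ldots,0}$ and cannot contribute to the claimed identity by linear independence of twisted-sector units in $\mathfrak H$. Once this bookkeeping is written out carefully, the three $z$-degree bounds above close the argument.
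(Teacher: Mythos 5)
Your proposal follows essentially the same approach as the paper: isolate the $\beta=0$ term (giving the dilaton shift $z$ and the extended-data terms at $z^0$), then bound the $z$-exponent for $\beta\neq 0$ via the dimension constraint $a=\dim_{\mathbb C}X -K_X\cdot\beta-\deg_{\mathbb C}\phi_\alpha$ on the $J$-function together with the lower bound $\sum k_{ij}\geq |I_\beta|\geq 2$ supplied by Assumption \ref{assumption}, handling $d=1$ with the constraint $a\geq 2$. Your writeup is in fact more detailed than the paper's. One small caveat: the justification at the end for excluding twisted sectors with some $m_i<0$ should not be ``linear independence'' (such terms would falsify the claimed $o(z^{-1})$ if they carried nonnegative $z$-powers); rather, the same $z$-degree bound, adjusted for the smaller ambient $\cap_{i:m_i\neq 0}D_i$ and the degree-$(d_i-1)$ hypergeometric factor at those $i$, again yields $z$-exponent $\leq -1$.
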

\begin{proof}
First, we consider the non-extended $I$-function $I_{X_{D,\infty}}(Q,0,0,z)$:
\begin{align}\label{I-snc-t0}
\sum_{\beta\in \on{NE}(X)} J_{X, \beta}(0,z)Q^{\beta}
\prod_{i=1}^n\prod_{0< a< d_i}(D_i+az)[\textbf{1}]_{ (-d_1,-d_2,\ldots, -d_n)}.
\end{align}
Recall that, when $\beta=0$, we have
\[
J_{X, 0}(0,z)=z.
\]
When $\beta\neq 0$, we have
\[
J_{X, \beta}(0,z)=\sum_\alpha\left\langle\psi^{a-2}\phi_\alpha\right\rangle_{0,1,\beta}^X\phi^\alpha\left(\frac{1}{z}\right)^{a-1}
\]
and 
\[
a=\dim_{\mathbb C} X-K_X\cdot \beta-\deg (\phi_\alpha)\geq -K_X\cdot\beta.
\]
Then, one can try to extract the coefficients of highest power of $z$ in (\ref{I-snc-t0}). Assumption \ref{assumption} exactly implies that $I_{X_{D,\infty}}(Q,0,0,z)$ does not contain positive power of $z$ and the coefficient of $z^0$ is $0$. With the extended data given by $x_{ij}$, the $z^0$-coefficient becomes
\[
\sum_{i,j} x_{ij}[\textbf 1]_{(0,\ldots,0,j,0,\ldots,0)}
\]
which corresponds to insertions of extra relative markings. This completes the proof.
\end{proof}

\begin{proof}[Proof of Theorem \ref{thm:period}]

Recall that $d:=D\cdot \beta$ and $d_i:=D_i\cdot \beta$. We consider the extended $I$-function of $X_{D,\infty}$ that takes value in $H^*(X)$:

\begin{align*}
I_{X_{D,\infty},0}(Q,x,t,z):=\sum_{\substack{\beta\in \on{NE}(X),(k_{i1},\ldots,k_{im})\in (\mathbb Z_{\geq 0})^m\\ \sum_{j=1}^m jk_{ij}= d_i, 1\leq i \leq n}  } & J_{X, \beta}(t,z)Q^{\beta}\frac{\prod_{i=1}^n\prod_{j=1}^m x_{ij}^{k_{ij}}}{z^{\sum_{i=1}^n\sum_{j=1}^m k_{ij}}\prod_{i=1}^n\prod_{j=1}^m(k_{ij}!)}\\
& \cdot\left(\prod_{i=1}^n\prod_{0<a\leq d_i}(D_i+az)\right).
\end{align*}
We choose the extended data such that $x_{ij}=0$ for $j> 1$. It means that we only consider relative markings with contact order $1$. We write $x_i:=x_{i1}$. Then the extended $I$-function that takes value in $H^*(X)$ can be written as  
\begin{align*}
I_{X_{D,\infty},0}(Q,x,t,z):=\sum_{\beta\in \on{NE}(X)} & J_{X, \beta}(t,z)Q^{\beta}\frac{\prod_{i=1}^n x_{i}^{d_i}}{z^{\sum_{i=1}^n d_i}\prod_{i=1}^n(d_i!)}\cdot\left(\prod_{i=1}^n\prod_{0<a\leq d_i}(D_i+az)\right).
\end{align*}

Let $t=0$ and $d\geq 2$. Taking the coefficient of $\left(\prod_{i=1}^nx_i^{d_i}\right)\textbf{1}\in H^0(X)$, we have
\[
\sum_{\beta\in \on{NE}(X)}  \left[J_{X, \beta}(0,z)\right]_0 Q^{\beta},
\]
where
\[
\left[J_{X, \beta}(0,z)\right]_0=\int_{[\bM_{0,1}(X,\beta)]^{\on{vir}}}\psi^{d-2}\on{ev}^*([pt])\left(\frac{1}{z}\right)^{d-1}.
\]
The corresponding coefficient of the $J$-function of $X_{D,\infty}$ is
\[
\sum_{\beta\in \on{NE}(X)} \frac{1}{\prod_{i=1}^n (d_i!)}N^{\on{orb}}_\beta(\textbf{q}_{(d_1,\ldots,d_n)})Q^\beta\left(\frac{1}{z}\right)^{d-1}.
\]
Since the mirror map is trivial, we have the relative $I$-function equals to the relative $J$-function. Multiplying both sides by $d!$, we have 
\[
d!\int_{[\bM_{0,1}(X,\beta)]^{\on{vir}}}\psi^{d-2}\on{ev}^*([pt])=\frac{d!}{d_1!\cdots d_n!}N^{\on{orb}}_\beta(\textbf{q}_{(d_1,\ldots,d_n)}),
\]
where $d=\sum_{i=1}^n d_i=-K_X\cdot \beta$.

This yields the identity between regularized quantum period of $X$ and the classical period of the mirror superpotential $W$:
\[
\hat{G}_X(t)=c_{W,d,0},
\]
where we use orbifold Gromov--Witten invariants of $X_{D,\infty}$ instead of log Gromov--Witten invariants of $(X,D)$ on the right-hand side.
\end{proof}

\begin{rmk}
Similar computation can also be done if we do not assume that mirror maps are trivial. For example, we can take $D$ to be a smooth anticanonical divisor. Then one can apply the mirror theorem for smooth pairs in \cite{FTY} and run the same computation as in the proof of Theorem \ref{thm:period}. In this case, the mirror map is not trivial. Then relative Gromov--Witten invariants of $(X,D)$ are related to the regularized quantum periods via mirror maps. One may understand the difference as follows. For a log Calabi--Yau pair $(X,D)$, the boundary has to be sufficiently degenerate (a "large complex structure limit") in order to obtain a well-behaved mirror (non-singular, with the correct dimension ect.).
\end{rmk}

%\textcolor{red}{The mirror map probably reflects the difference of the mirror of $(X,D)$ when we choose different $D$....}

\bibliographystyle{amsxport}

%\bibliography{universalbib}

\end{document}